\newtheorem{theorem}{Theorem}[section]
\newtheorem{corollary}[theorem]{Corollary}
\newtheorem{definition}[theorem]{Definition}
\newtheorem{lemma}[theorem]{Lemma}
\newtheorem{question}[theorem]{Question}
\renewenvironment{proof}[1][Proof]{\noindent\textbf{#1.} }{\ \rule{0.5em}{0.5em}}
\begin{document}
\title[Geodesic orbit Finsler space with $K\geq0$ and the (FP) condition]{Geodesic orbit Finsler space \\with $K\geq0$ and the (FP) condition}
\author{Ming Xu}

\address{Ming Xu \newline
School of Mathematical Sciences \newline
Capital Normal University \newline
Beijing 100048, P. R. China}
\email{mgmgmgxu@163.com}

\date{}
\maketitle

\begin{abstract}
In this paper, we study the interaction between the geodesic orbit (g.o.~in short) property and certain flag curvature conditions.
A Finsler manifold is called g.o.~if each constant speed geodesic is the orbit of a one-parameter subgroup.
Besides the non-negatively curved condition, we also concern the (FP) condition for the flag curvature, i.e., in any flag we can find a flag pole, such that the flag curvature  is positive. The main theorem we will prove is the following. If
a g.o.~Finsler space $(M,F)$ has non-negative flag curvature and satisfies the (FP) condition, then $M$ must be compact. Further more, if we present $M$ as $G/H$ where $G$ has a compact Lie algebra, then we have the rank inequality $\mathrm{rk}\mathfrak{g}\leq\mathrm{rk}\mathfrak{h}+1$. As an application of the main theorem, we prove that any even dimensional g.o.~Finsler space which has non-negative flag curvature and satisfies the (FP) condition must be a smooth coset space admitting positively curved homogeneous Riemannian or Finsler metrics.
\smallskip

\textbf{Mathematics Subject Classification (2010)}: 22E46, 53C22, 53C60.

\textbf{Key words}: flag curvature, geodesic orbit Finsler space, homogeneous Finsler space, homogeneous geodesic, non-negatively curved condition, (FP) condition
\end{abstract}

\section{Introduction}

A homogeneous Riemannian manifold is called a {\it g.o.~space}, i.e., a
{\it geodesic orbit space}, if any geodesic is the
orbit of a one-parameter subgroup of isometries.
This notion was introduced by O. Kowalski and L. Vanhecke
in 1991 \cite{KV1991}, as a generalization of the naturally
reductive homogeneity. Geodesic orbit Riemannian manifolds
have been studied extensively. See for example \cite{AA2007,AN2009,AV1999,BN2018,DKN2004,Go1996,GN2018}
and the references therein. In Finsler geometry, g.o.~space defined by S. Deng and Z. Yan \cite{YD2014} and
its subclasses, normal homogeneous Finsler spaces,
$\delta$-homogeneous Finsler spaces, etc., have also been studied in
recent years. See for example
\cite{XD2017-1,XD2017-2,Xu2018,XDY-preprint,XZ2018}.

Totally geodesic submanifold and Finslerian submersion provide important techniques for studying the flag curvature in homogeneous Finsler geometry \cite{XDHH,XZ2017}. For g.o.~Finsler spaces, these techniques can be refined to be more powerful (see \cite{XDY-preprint} and the discussion in Section \ref{sec-total-geo} and Section \ref{sec-sub-tech}). They help us explore the interactions between the g.o.~property and many different flag curvature properties.

The flag curvature properties we will discuss in this paper include not only the non-negatively curved condition, i.e., all flag curvatures are non-negative, but also the {\it (FP) condition} (or the
{\it flagwise positively curved condition}), i.e., in each flag we can find a flag pole such that the flag curvature is positive
(see Definition \ref{define-FP-condition} in Section 2).

The notion of the (FP) condition was introduced in \cite{Xu2017,XD2018}. Though it is equivalent to the positively curved condition when $F$ is Riemannian, generally speaking
it is much weaker in Finsler geometry \cite{Xu2017}. However, we believe that the combination of the non-negatively curved condition and the (FP) condition is an interesting
approximation to the positively curved condition, and some important geometric
or topological properties of positively curved homogeneous Finsler spaces \cite{DX-survey}
might be satisfied by flagwise positively and non-negatively
curved homogeneous Finsler spaces.

It is well known that the compactness (i.e. Bonnet-Myers theorem \cite{BCS2000}) and the rank inequality \cite{DH2013,Wa1972}
are among the most fundamental and important properties of positively curved homogeneous manifolds. So
we were enlightened to propose the following two questions (see Problem 4.1 and Problem 4.2 in \cite{XD2018}).

\begin{question}\label{main-question-1}
Let $(M,F)$ be a connected homogeneous Finsler manifold which
has non-negative flag curvature and satisfies the
(FP) condition. Can we prove the compactness for $M$?
\end{question}

\begin{question}\label{main-question-2}
Let $(G/H,F)$ be a connected homogeneous Finsler space which
has non-negative flag curvature and satisfies the
(FP) condition. Suppose $G$ is quasi-compact, i.e.
it has a compact Lie algebra. Can we prove the rank
inequality $\mathrm{rk}\mathfrak{g}\leq\mathrm{rk}\mathfrak{h}+1$?
\end{question}

In this paper, we will answer these two questions
from the positive side for g.o.~Finsler spaces, by the following theorem.

\begin{theorem}\label{main-thm}
Assume that $(M,F)$ with $\dim M>1$ is a connected g.o.~Finsler space which has
non-negative flag curvature and satisfies the (FP) condition, then $M$ is compact. If we present $M=G/H$ in which $G$ is quasi-compact, then we have
the rank inequality $\mathrm{rk}\mathfrak{g}\leq\mathrm{rk}\mathfrak{h}+1$,
where $\mathfrak{g}$ and $\mathfrak{h}$ are the Lie algebras
of $G$ and $H$ respectively.
\end{theorem}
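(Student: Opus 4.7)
Both halves of the theorem follow the same template: assume the conclusion fails and use the g.o.\ hypothesis with $K\ge 0$ to exhibit a $2$-dimensional totally geodesic flat submanifold, giving a flag $P\subset T_pM$ with $K(P,y)=0$ for every flag pole $y\in P$, in direct contradiction to (FP). For compactness, suppose $M$ is non-compact. I would first establish a de~Rham--type splitting $M\cong N\times\mathbb{R}^{k}$ (up to finite cover) with $N$ compact and $k\ge 1$; the line that triggers the splitting is supplied by non-compactness plus $K\ge 0$, and in a g.o.\ space such a line is automatically an orbit of a one-parameter subgroup, so the totally geodesic machinery of Section~\ref{sec-total-geo} peels off a flat $\mathbb{R}$-factor, and one iterates. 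If $k\ge 2$ a flat $2$-plane already lives inside the Euclidean factor; if $k=1$, the hypothesis $\dim M>1$ forces $\dim N\ge 1$, so that the product of any geodesic in $N$ with the $\mathbb{R}$-factor is a totally geodesic flat $2$-submanifold. Either way (FP) fails.

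For the rank inequality, assume $\mathrm{rk}\,\mathfrak{g}\ge\mathrm{rk}\,\mathfrak{h}+2$ with $G$ quasi-compact. Extend a maximal torus $\mathfrak{t}_H$ of $\mathfrak{h}$ to a maximal torus $\mathfrak{t}$ of $\mathfrak{g}$, and pick a $2$-dimensional subspace $\mathfrak{a}\subset\mathfrak{t}$ with $\mathfrak{a}\cap\mathfrak{h}=0$. The closed subgroup $A=\overline{\exp\mathfrak{a}}\subset G$ is a torus, and its orbit $A\cdot p$ at $p=eH$ is an immersed $2$-torus carrying a translation-invariant, hence flat, induced Finsler metric. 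The heart of the argument is showing that $A\cdot p$ is totally geodesic; once this is established, any tangent $2$-plane at $p$ supplies the flat $P$ needed to violate (FP).

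The main obstacle is exactly this last step. In a naturally reductive g.o.\ space the geodesic through $p$ in direction $v\in\mathfrak{a}\cap\mathfrak{m}$ is $\exp(tv)\cdot p\subset A\cdot p$, but in a general g.o.\ Finsler space the geodesic-generating vector has the form $a+v$ with $a\in\mathfrak{h}$, and one must show that---possibly after exploiting the isotropy action and the $K\ge 0$ hypothesis---the curve $\exp(t(a+v))\cdot p$ still lies in $A\cdot p$. Promoting the flat orbit to a totally geodesic one in the g.o.\ Finsler framework is where the refined submersion and totally geodesic techniques of Sections~\ref{sec-total-geo} and~\ref{sec-sub-tech} should be decisive, and the same machinery is expected to yield the Finsler splitting theorem used above for the compactness step.
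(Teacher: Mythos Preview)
Your overall strategy---violate (FP) by exhibiting a totally geodesic flat $2$-plane---is the right one, and it is what the paper does for the rank inequality. But both halves of your proposal contain genuine gaps that are not mere details.

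\textbf{Rank inequality.} You correctly isolate the obstacle: in a general g.o.\ Finsler space the geodesic in direction $v\in\mathfrak{a}$ is $\exp(t(a+v))\cdot p$ with $a\in\mathfrak{h}$, and there is no direct reason this should stay in the torus orbit $A\cdot p$. The paper does not overcome this obstacle head-on; instead it sidesteps it via the fixed-point technique (Lemma~\ref{lemma-go-fix-point-tech}). Take the maximal torus $T_H\subset H$ and pass to $\mathrm{Fix}_o(T_H,G/H)$, which is totally geodesic and g.o.\ for $G''=(C_G(T_H))_0/T_H$. Because the isotropy in $G''$ is finite, the restricted metric is locally a left-invariant $G''$-g.o.\ metric on the group $G''$ itself; by Lemma~\ref{lemma-left-invariant-go} this forces the metric to be \emph{bi-invariant} and $\mathfrak{g}''$ compact. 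In the bi-invariant setting a maximal torus is automatically totally geodesic and flat (view $G''=G''\times G''/\Delta G''$ and apply the fixed-point lemma again), and its dimension is $\mathrm{rk}\,\mathfrak{g}-\mathrm{rk}\,\mathfrak{h}$. So the missing idea is: reduce first to a bi-invariant metric via fixed points, rather than trying to prove the torus orbit is totally geodesic in $G/H$ directly.

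\textbf{Compactness.} Here your approach diverges substantially from the paper's and rests on a Finsler splitting theorem that you do not prove and that is not available in the literature in the required generality. The paper does not argue via splitting at all. Instead, it first shows (Lemma~\ref{step-1}) that $\mathfrak{g}'=\mathrm{Lie}(I_0(M,F))$ is real reductive with $\dim\mathfrak{c}(\mathfrak{g}')\le 1$, then proves the rank inequality for $G'=I_0(M,F)$ (Lemma~\ref{step-2}, as above), and only then invokes Theorem~\ref{thm-preparation}---a substantial case-by-case analysis, inducting on $\mathrm{rk}\,\mathfrak{h}$ and running through eight cases for $\mathrm{rk}\,\mathfrak{h}=1$---to conclude that $\mathfrak{g}'$ is compact. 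A further lemma (Lemma~\ref{step-3}) handles the possible $1$-dimensional center and guarantees a compact transitive subgroup. In particular, the rank inequality is logically \emph{prior} to compactness in the paper's argument, not independent of it. Your proposed splitting route, even in the Riemannian case, would require knowing in advance that the isometry group has a Euclidean factor acting by translations, which is close to what you are trying to prove; in the Finsler case the analytic ingredients (Busemann functions, Laplacian comparison) are not available, and the paper's Question~\ref{Question-2} records that even the weaker conclusion is open in general.
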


When $(M,F)$ is normal homogeneous or $\delta$-homogeneous, the proof of Theorem \ref{main-thm} is much easier (see Theorem 4.2
in \cite{XZ2018}). But here we need more delicate and complicated usage of the Lie theories (Section \ref{sec-lie-theory}) and the techniques from totally geodesic submanifolds (Section \ref{sec-total-geo}) and Finslerian submersions (Section \ref{sec-sub-tech}), and the case by case
discussion for a special class of non-negatively curved g.o.~ Finsler spaces (Section \ref{sec-a-special-class-non-neg}).

As an application of Theorem \ref{main-thm}, we prove that
any even dimensional flagwise positively and non-negatively curved
connected g.o.~Finsler space is a compact coset space admitting
positively curved homogeneous Riemannian or Finsler metric (see
Corollary \ref{last corollary} at the end). Unfortunately, this is not a complete classification because we do not know if the three Wallach spaces,
$SU(3)/T^2$, $Sp(3)/Sp(1)^3$ and $F_4/Spin(8)$ \cite{Wa1972}, admit g.o.~Finsler metrics which have non-negative flag curvature and satisfy the
(FP) condition.

With the tools accumulated from Lie theory and geometry,
we expect more questions to be asked and studied for
non-negatively curved g.o.~Finsler spaces, or
more generally, for all non-negatively curved homogeneous Finsler
spaces (see Question \ref{Question-1} and Question \ref{Question-2} in Section \ref{sec-a-special-class-non-neg}).

This paper is organized as following. In Section 2, we summarize
some basic knowledge on Finsler geometry and Lie theory. In Section 3, we recall the notion of g.o.
Finsler space and discuss the totally geodesic and
submersion techniques for it. In Section 4, we consider a special
class of non-negatively curved g.o.~Finsler spaces as the preparation for proving Theorem \ref{main-thm}. In Section 5,
we prove Theorem \ref{main-thm} and its corollary.
\section{Preliminaries}
In this section, we summarize some fundamental knowledge on Finsler geometry and Lie theory.  Unless otherwise specified, in this paper we only discuss connected smooth manifolds which dimension is bigger than $1$.

\subsection{Finsler metric and Minkowski norm}

A {\it Finsler metric} on a manifold $M$ with $\dim M=m>0$ is a continuous function $F:TM\rightarrow [0,\infty)$ satisfying the
following conditions for any {\it standard} local coordinates $(x,y)\in TM$,
where $x=(x^i)\in M$ and $y=y^j\partial_{x^j}\in T_xM$:
\begin{enumerate}
\item The restriction of $F$ to $TM\backslash 0$ is a positive smooth function;
\item For any $\lambda\geq0$ and $y\in T_xM$,
$F(x,\lambda y)=\lambda F(x,y)$;
\item When $y\neq 0$, the Hessian matrix $(g^F_{ij}(x,y))=(\partial^2_{y^iy^j}[F^2]/2)$
    is positive definite.
\end{enumerate}
We will also call the pair $(M,F)$ a {\it Finsler manifold}
or a {\it Finsler space}.

The Hessian matrix $(g_{ij}^F(x,y))$ for a Finsler metric $F$
defines an inner product at the nonzero vector $y\in T_xM$, i.e.,
\begin{equation}\label{def-fundamental-tensor}
\langle u,v\rangle^F_y=u^iv^j g^F_{ij}(x,y)=
\frac12\frac{\partial^2}{\partial s\partial t}[F^2(y+su+tv)]|_{s=t=0},
\end{equation}
for any $u=u^i\partial_{x^i}$ and $v=v^j\partial_{x^j}$ in $T_xM$.
We call $\langle\cdot,\cdot\rangle_y^F $ the {\it fundamental tensor} and simply denoted it
as $g_y^F$ sometimes. A Finsler metric $F$ is {\it Riemannian} iff
$g_y^F$ for $y\in T_xM$ is only relevant to the point $x\in M$.

The restriction of a Finsler metric $F$ to each tangent space $T_xM$
is called a {\it Minkowski norm}. Minkowski norm can be abstractly defined on any real vector space $\mathbf{V}$ with $0<\dim\mathbf{V}<\infty$ by similar conditions as (1)-(3) above, and
we call the pair $(\mathbf{V},F)$ a {\it Minkowski space}.
The fundamental tensor for a
Minkowski norm can be similarly defined by (\ref{def-fundamental-tensor}).

See \cite{BCS2000,CS2005,Sh2001}
for more details.

\subsection{Flag curvature and the (FP) condition}

Let $(M,F)$ be a Finsler space, and consider any {\it flag triple} $(x,y,\mathbf{P})$ of it, where $x$ is a point on $M$, the {\it flag}
$\mathbf{P}$ is
a tangent plane in $T_xM$ and the {\it flag pole} $y$ is a nonzero vector in $\mathbf{P}$. Then the {\it flag curvature} for this flag triple
is defined by
$$K^F(x,y,\mathbf{P})=\frac{\langle R^F_y u,u\rangle_y^F}{
\langle y,y\rangle_y^F\langle u,u\rangle_y^F-
\langle y,u\rangle_y^F\langle y,u\rangle_y^F},$$
in which
$R^F_y:T_xM\rightarrow T_xM$ is the Riemann curvature
(see \cite{BCS2000} for its explicit formula by
standard local coordinates), and $u$ is any vector in $\mathbf{P}$
such that $\mathbf{P}=\mathrm{span}\{y,u\}$.

Generally speaking, the flag curvature in Finsler geometry depends
on the flag triple but not the vector $u$. When the metric is Riemannian, the flag curvature coincides with the sectional curvature and depend on $x$ and $\mathbf{P}$ only.

We call a Finsler space {\it positively curved} (or {\it non-negatively curved}) if the flag curvature for any triple
is positive (or non-negative, respectively).

By definition, flag curvature in Finsler geometry is a natural generalization of sectional curvature in Riemannian geometry. However, it is much more local a geometric quantity. So we may propose the following variation for the positively curved condition, which is called the {\it flagwise positively curved condition} or the {\it (FP) condition} \cite{XD2018}.

\begin{definition}\label{define-FP-condition}
We call a Finsler space $(M,F)$ flagwise positively curved or satisfying the (FP) condition, if for any $x\in M$ and any tangent plane $\mathbf{P}\subset T_xM$, we can find a nonzero vector $y\in \mathbf{P}$, such that $K^F(x,y,\mathbf{P})>0$.
\end{definition}

The trace of the Riemann curvature $R^F_y:T_xM\rightarrow T_xM$ is also
an important geometric quantity in Finsler geometry, called the Ricci scalar, and denoted as $\mathrm{Ric}(x,y)=\mathrm{Tr} R^F_y$. Using flag curvature, Ricci scalar can be presented as following.
If we take any $g^F_y$-orthonormal basis $\{u_1,\ldots,u_{m-1}\}$
for the $g^F_y$-orthogonal complement of a nonzero tangent vector $y$ in
$T_xM$, and the tangent planes $\mathbf{P}_i=\mathrm{span}\{y,u_i\}$, then we have
$$\mathrm{Ric}(x,y)=F^2(x,y)\sum_{i=1}^{m-1}K^F(x,y,\mathbf{P}_i).$$
So any non-negatively curved Finsler space must have non-negative Ricci scalar.

See \cite{BCS2000,CS2005,Sh2001} for more details.
\subsection{Finslerian Submersion}

In \cite{PD2001}, J.C. \'{A}lvarez Paiva and C.E. Dur\'{a}n introduced the notion of {\it Finslerian submersion}.

A linear endomorphism $l:(\mathbf{V}_1,F_1)\rightarrow(\mathbf{V}_2,F_2)$ between two
Minkowski spaces
is called a {\it linear submersion}, if $l$ maps the $F_1$-unit ball in $\mathbf{V}_1$ onto the $F_2$-unit ball in $\mathbf{V}_2$.

A smooth map $f:(M_1,F_1)\rightarrow (M_2,F_2)$ between two Finsler spaces is called a {\it Finslerian submersion} (or simply
{\it submersion}) if its tangent map $f_*:(T_x{M_1},F_1(x,\cdot))\rightarrow (T_{f(x)}M_2,F_2(f(x),\cdot)$ is a linear submersion everywhere
\cite{PD2001}.

When the submersion $f:(M_1,F_1)\rightarrow (M_2,F_2)$ is surjective, the metric $F_2$ can be uniquely determined by
$f$ and $F_1$. The existence of $F_2$ can be
insured by the following lemma sometimes
(see Lemma 2.4 in \cite{XDY-preprint}).

\begin{lemma}\label{lemma-0}
Assume that the Finsler space $(M,F)$ admits the isometric $G$-action for some Lie group $G$ and the orbit space $G\backslash M$
is a smooth manifold such that the natural projection $\pi:M\rightarrow G\backslash M$ is a smooth map. Then we can find a
unique Finsler metric $F'$ on $G\backslash M$, such that $\pi:(M,F)\rightarrow(G\backslash M,F')$ is a submersion.
\end{lemma}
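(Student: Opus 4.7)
The plan is to construct $F'$ pointwise by horizontal lifting, using the strict convexity of the fibre Minkowski norms of $F$, and then to verify smoothness of the resulting function via a local section of $\pi$. Fix $\bar x\in G\backslash M$ and $\bar y\in T_{\bar x}(G\backslash M)$, choose any $x\in\pi^{-1}(\bar x)$, and write $\mathfrak{v}_x:=T_x(G\cdot x)$ for the vertical subspace, which equals $\ker\pi_*$. Since $F(x,\cdot)^2$ is smooth and strictly convex on $T_xM$, its restriction to the affine subspace $\pi_*^{-1}(\bar y)$ attains a unique minimum at a vector $\tilde y$, characterized by $\pi_*\tilde y=\bar y$ together with the orthogonality condition $\langle\tilde y,v\rangle^F_{\tilde y}=0$ for all $v\in\mathfrak{v}_x$. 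Define
$$F'(\bar x,\bar y):=F(x,\tilde y).$$
Independence of the choice of $x$ in the orbit is immediate: if $x'=g\cdot x$ with $g\in G$, then the isometry $g$ satisfies $\pi\circ g=\pi$, so $g_*:T_xM\to T_{x'}M$ is an $F$-preserving linear isomorphism carrying $\pi_*^{-1}(\bar y)$ at $x$ onto $\pi_*^{-1}(\bar y)$ at $x'$, and the two minima coincide.

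Next I check the Finsler axioms for $F'$. Non-negativity, non-degeneracy and positive homogeneity follow at once from the corresponding properties of $F$. The restriction $F'(\bar x,\cdot)$ to a single tangent space is the quotient Minkowski norm of $F(x,\cdot)$ under the linear surjection $\pi_*$, and its open unit ball equals $\pi_*(\{y:F(x,y)<1\})$; the standard theory of quotient Minkowski norms (cf.~\cite{PD2001}) shows that this image is smooth and strongly convex, so $F'(\bar x,\cdot)^2$ has positive definite Hessian. For the joint smoothness of $F'$ off the zero section I use the hypothesis that $\pi$ is a smooth submersion between manifolds: after choosing a smooth local section $\sigma:U\subset G\backslash M\to M$ of $\pi$, the horizontal lift $\tilde y$ at $\sigma(\bar x)$ depends smoothly on $(\bar x,\bar y)$ by the implicit function theorem applied to the orthogonality equations, and hence $F'(\bar x,\bar y)=F(\sigma(\bar x),\tilde y)$ is smooth.

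The submersion property is then automatic: by construction $\pi_*$ sends the unit ball $\{y\in T_xM:F(x,y)\leq 1\}$ onto $\{\bar y\in T_{\bar x}(G\backslash M):F'(\bar x,\bar y)\leq 1\}$. Uniqueness follows because a Minkowski norm is determined by its unit ball, so the submersion condition forces any candidate to coincide with $F'$. The main technical point I expect to require the most care is the strong convexity of the quotient ball $\pi_*(\{y:F(x,y)\leq 1\})$: mere convexity is clear, but smoothness and strict convexity of its boundary, equivalent to positive definiteness of the Hessian of $F'(\bar x,\cdot)^2$, is the usual Finslerian-submersion technical step and must be handled fiberwise via the implicit function theorem applied to the orthogonality conditions characterizing $\tilde y$.
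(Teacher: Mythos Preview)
Your argument is correct and follows the standard construction of quotient Minkowski norms and Finslerian submersions: define $F'$ fiberwise by minimizing $F$ over the affine preimage $\pi_*^{-1}(\bar y)$, use the $G$-invariance of $F$ together with $\pi\circ g=\pi$ to get well-definedness, and handle smoothness and strong convexity via a local section of $\pi$ and the implicit function theorem applied to the orthogonality equations $\langle\tilde y,\mathfrak v_x\rangle^F_{\tilde y}=0$. Your identification of the strong convexity of the projected indicatrix as the delicate point is accurate, and your reference to \cite{PD2001} for this is appropriate.

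Note, however, that the paper itself does \emph{not} supply a proof of this lemma: it simply records the statement and points to Lemma~2.4 of \cite{XDY-preprint}. So there is nothing to compare against here; you have written out what the paper only cites. One small caveat worth tightening: you invoke ``the hypothesis that $\pi$ is a smooth submersion,'' but the lemma as stated only assumes $\pi$ is a smooth map onto a smooth orbit space. In practice this forces $\pi_*$ to be surjective (the fibers are the $G$-orbits, hence $\ker\pi_{*,x}=T_x(G\cdot x)$ and a dimension count gives surjectivity), but you should say this explicitly before appealing to local sections.
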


Horizonal lifting is a crucial notion in the study of submersion.
For a linear submersion $l:(\mathbf{V}_1,F_1)\rightarrow
(\mathbf{V}_2,F_2)$, any vector $v_2\in\mathbf{V}_2\backslash\{0\}$ has a unique
{\it horizonal lifting} $v_1\in\mathbf{V}_2\backslash\ker l$ defined by $l(v_1)=v_2$ and $F_1(v_1)=F_2(v_2)$.

For a submersion between two Finsler spaces,
$f:(M_1,F_1)\rightarrow(M_2,F_2)$ with $f(x_1)=x_2$,
any smooth curve $c_2(t)$ with $c_2(0)=x_2$ on $(M_2,F_2)$
has a unique horizonal lifting at $x_1$, i.e., a smooth curve $c_1(t)$ with $c_1(0)=x_1$ on $(M_1,F_1)$, such that
for each $t$, $\dot{c}_1(t)$ is the horizonal lifting of $\dot{c}_2(t)$, for the linear submersion $f_*:
(T_{c_1(t)}M_1,F_1(c_1(t),\cdot))\rightarrow
(T_{c_2(t)}M_2,F_2(c_2(t),\cdot))$. It is an important observation
that the horizonal lifting of a constant speed geodesic is  geodesic with the same constant speed.

For a submersion $f:(M_1,F_1)\rightarrow(M_2,F_2)$, the {\it horizonal lifting} of a flag triple can be similarly defined.
Its importance is revealed by the following generalization for
the classical O'Neill formula \cite{PD2001}.

\begin{theorem}\label{theorem-flag-curvature-inequality-Finsler-submersion}
For any flag triple $(x_2,y_2,\mathbf{P}_2)$ of $(M_2,F_2)$ in the submersion $f:(M_1,F_1)\rightarrow(M_2,F_2)$, and any $x_1\in f^{-1}(x_2)$, horizonal lifting provides a unique flag triple
$(x_1,y_1,\mathbf{P}_1)$ of $(M_1,F_1)$, and we have
$K^{F_1}(x_1,y_1,\mathbf{P}_1)\leq K^{F_2}(x_2,y_2,\mathbf{P}_2)$.
\end{theorem}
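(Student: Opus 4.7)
The plan is to reduce the flag curvature inequality to a pointwise norm comparison for a pair of Jacobi fields, exploiting the observation (noted in the paragraph just before the theorem) that the horizontal lift of a constant-speed geodesic is a constant-speed geodesic. First I would make precise what ``the'' horizontal lift $(x_1,y_1,\mathbf{P}_1)$ of the flag triple $(x_2,y_2,\mathbf{P}_2)$ means. Let $y_1$ be the horizontal lift of $y_2$ at $x_1$, write $\mathbf{P}_2=\mathrm{span}\{y_2,u_2\}$, and let $v(s)$ be the horizontal lift of $y_2+su_2$ for $s$ near $0$. Setting $u_1:=\frac{d}{ds}v(s)\big|_{s=0}$ and $\mathbf{P}_1:=\mathrm{span}\{y_1,u_1\}$, one checks by differentiating the defining equations $f_*v(s)=y_2+su_2$ and $F_1(v(s))=F_2(y_2+su_2)$ that $f_*u_1=u_2$ and that $\mathbf{P}_1$ does not depend on the particular $u_2$ chosen; this gives uniqueness.

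Next I would set up a geodesic variation. Put $c_2^s(t):=\exp_{x_2}\bigl(t(y_2+su_2)\bigr)$ and let $c_1^s$ be the horizontal lift of $c_2^s$ starting at $x_1$; each $c_1^s$ is a constant-speed geodesic of $(M_1,F_1)$ by the cited property, and therefore $J_1(t):=\partial_sc_1^s(t)\big|_{s=0}$ is a Jacobi field along $c_1:=c_1^0$ with $J_1(0)=0$ and $J_1'(0)=u_1$. The analogous $J_2$ satisfies $J_2(0)=0$, $J_2'(0)=u_2$, and $f_*J_1(t)=J_2(t)$ since $f\circ c_1^s=c_2^s$. Both sides of the curvature inequality are now encoded in the standard asymptotic expansion
\begin{equation*}
\langle J_k(t),J_k(t)\rangle^{F_k}_{\dot c_k(t)}=t^2\langle u_k,u_k\rangle^{F_k}_{y_k}-\tfrac{t^4}{3}K^{F_k}(x_k,y_k,\mathbf{P}_k)\bigl(\langle y_k,y_k\rangle^{F_k}_{y_k}\langle u_k,u_k\rangle^{F_k}_{y_k}-(\langle y_k,u_k\rangle^{F_k}_{y_k})^2\bigr)+O(t^5),
\end{equation*}
valid in any Finsler space for a Jacobi field vanishing at $t=0$, and the theorem would follow once one establishes the pointwise inequality $\langle J_1(t),J_1(t)\rangle^{F_1}_{\dot c_1(t)}\geq\langle J_2(t),J_2(t)\rangle^{F_2}_{\dot c_2(t)}$ together with equality of the leading $t^2$ coefficients.

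The main obstacle is precisely this pointwise inequality. In the Riemannian case it is handled by splitting $J_1$ into horizontal and vertical parts and invoking O'Neill's nonnegative $A$-tensor term; in the Finsler setting, however, the horizontal ``distribution'' is only a fibered cone and the fundamental tensor $\langle\cdot,\cdot\rangle^{F_1}_{\dot c_1(t)}$ depends on the reference direction, so the naive split breaks down. The correct replacement is the following algebraic fact about any linear submersion $l:(\mathbf{V}_1,F_1)\to(\mathbf{V}_2,F_2)$: because $l$ sends the $F_1$-unit ball onto the $F_2$-unit ball, a convex-duality argument on the transpose map yields $\langle l(w),l(w)\rangle^{F_2}_{l(v_1)}\leq\langle w,w\rangle^{F_1}_{v_1}$ for every horizontal $v_1$ and every $w\in\mathbf{V}_1$, with equality when $w$ is also horizontal. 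Applied pointwise along $c_1$ with $v_1=\dot c_1(t)$ and $w=J_1(t)$, this gives both the pointwise inequality and the equality of the $t^2$ terms, hence the flag curvature inequality. Carrying out this convex-duality step cleanly, which is the Finsler analog of O'Neill's horizontal-lift-shortens-norm principle, is the real geometric content of the proof and is the step performed by \'Alvarez Paiva and Dur\'an in \cite{PD2001}.
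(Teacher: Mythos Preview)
The paper does not prove this theorem. It is stated in Section~2.3 as a known result and attributed directly to \'Alvarez Paiva and Dur\'an \cite{PD2001}; no argument is given beyond the citation. So there is no ``paper's own proof'' to compare your proposal against.

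Your outline is a plausible reconstruction of the \cite{PD2001} argument, and you correctly identify the crux: the Finsler substitute for O'Neill's vertical $A$-tensor term is a convex-duality inequality between fundamental tensors at horizontal reference vectors under a linear submersion. One place you should be more careful is the claim that the $t^2$ coefficients match, i.e., that $\langle u_1,u_1\rangle^{F_1}_{y_1}=\langle u_2,u_2\rangle^{F_2}_{y_2}$. Your $u_1$ is the $s$-derivative at $s=0$ of the horizontal lift $v(s)$ of $y_2+su_2$, and differentiating $F_1^2(v(s))=F_2^2(y_2+su_2)$ twice brings in $v''(0)$ and Cartan-tensor terms, not just $g_{y_1}(u_1,u_1)$; the equality you want does hold, but it requires the observation that the tangent hyperplane to the $F_1$-indicatrix at $y_1$ maps onto the tangent hyperplane to the $F_2$-indicatrix at $y_2$ (so the osculating inner products agree on the relevant pair), which is exactly the linear-algebraic content of the \'Alvarez Paiva--Dur\'an lemma you cite at the end. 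With that ingredient handled, the Jacobi-field expansion finishes the job.
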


We will frequently use the following immediate corollary of
Theorem \ref{theorem-flag-curvature-inequality-Finsler-submersion}.

\begin{lemma}\label{coro-flag-curv-submersion}
Let $f:(M_1,F_1)\rightarrow(M_2,F_2)$ be a surjective
Finslerian submersion. Suppose that $(M_1,F_1)$ has positive or
non-negative flag curvature, then so does $(M_2,F_2)$ respectively.
\end{lemma}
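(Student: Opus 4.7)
The plan is to read this off directly from Theorem \ref{theorem-flag-curvature-inequality-Finsler-submersion}, which is precisely the generalized O'Neill inequality stating that flag curvatures of horizontal lifts upstairs do not exceed the corresponding flag curvatures downstairs. The only real content is the logical bookkeeping.

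First, I would fix an arbitrary flag triple $(x_2,y_2,\mathbf{P}_2)$ of $(M_2,F_2)$. Since $f$ is surjective, the fiber $f^{-1}(x_2)$ is nonempty, so I can choose some $x_1\in f^{-1}(x_2)$. By the horizonal lifting construction for linear submersions recalled just before Theorem \ref{theorem-flag-curvature-inequality-Finsler-submersion}, this produces a uniquely determined flag triple $(x_1,y_1,\mathbf{P}_1)$ on $(M_1,F_1)$.

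Next I would apply Theorem \ref{theorem-flag-curvature-inequality-Finsler-submersion} to obtain
\[
K^{F_2}(x_2,y_2,\mathbf{P}_2)\geq K^{F_1}(x_1,y_1,\mathbf{P}_1).
\]
If $(M_1,F_1)$ has non-negative flag curvature, then the right-hand side is $\geq 0$, so $K^{F_2}(x_2,y_2,\mathbf{P}_2)\geq 0$; since $(x_2,y_2,\mathbf{P}_2)$ was arbitrary, $(M_2,F_2)$ is non-negatively curved. The same argument with strict inequality handles the positively curved case.

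There is essentially no obstacle here: the only subtlety worth flagging is making sure that surjectivity of $f$ is genuinely used (to guarantee that every downstairs flag arises as the image of some horizontally lifted flag), and that the horizonal lifting is well-defined at the chosen $x_1$, which follows from $y_2\neq 0$ so that $y_1\notin\ker f_*$. No calculations are needed beyond invoking the cited theorem.
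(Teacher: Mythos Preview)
Your proof is correct and matches the paper's approach: the paper states this lemma as an ``immediate corollary'' of Theorem~\ref{theorem-flag-curvature-inequality-Finsler-submersion} without giving any further argument, and what you have written is exactly the obvious bookkeeping that makes this immediate.
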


\subsection{Totally geodesic Finsler submanifold}

Let $M'$ be a submanifold in a Finsler space $(M,F)$. The restriction $F(x,\cdot)|_{T_xM'}$ of the Minkowski norm $F(x,\cdot)$ at each $x\in M'$ defines a Finsler metric $F|_{M'}$
on $M'$. We call $(M',F|_{M'})$ a {\it Finsler submanifold} in $(M,F)$.

We call the submanifold $M'$ or $(M',F)$ {\it totally geodesic} in a $(M,F)$ if for any $x\in M'$ and any nonzero vector $y\in T_xM'$,
the constant speed geodesic $c(t)$ on $(M,F)$ with $c(0)=x$ and $\dot{c}(0)=y$ is contained in $M'$ \cite{XD2017-1}. For example, each connected component of the common fixed point set for a family of isometries is
a closed totally geodesic submanifold \cite{XZ2017}.
Notice that the common zero point set for a family of Killing vector fields can be viewed as the common fixed point set for the
isometries they generate.

There are several equivalent descriptions for a totally geodesic submanifold.

The geodesics on $(M',F|_{M'})$ can be similarly defined as on the ambient space,
by a local minimizing principle for the arc length functional. The submanifold $M'$ is totally geodesic in $(M,F)$ iff any geodesic on $(M',F|_{M'})$ is also a geodesic on
$(M,F)$.

In this paper, we only discuss geodesics with positive constant speeds (i.e., {\it constant speed geodesics}), which
can be equivalently described as following. The
non-constant smooth curve $c(t)$ is a constant speed geodesic on
$(M,F) $ iff $(c(t),\dot{c}(t))$ is the integration curve of the {\it geodesic spray} $\mathrm{G}_M$, which is a globally defined smooth tangent vector field on $TM\backslash 0$ (see \cite{BCS2000} for its explicit description by standard local coordinates). For the Finsler submanifold $(M',F|_{M'})$, we have
another geodesic spray $\mathrm{G}_{M'}$ on $TM'\backslash 0$.
By fundamental knowledge on ordinary differential equations,
it is easy to prove the following lemma.

\begin{lemma} \label{lemma-total-geo-spray}
The following statements are equivalent for the submanifold $M'$ in a Finsler space $(M,F)$:
\begin{enumerate}
\item $M'$ is totally geodesic in $(M,F)$;
\item The restriction of $\mathrm{G}_M$ to $TM'\backslash 0$ is a tangent vector field on $TM'\backslash 0$;
\item The restriction of $\mathrm{G}_M$ to $TM'\backslash 0$ coincides with $\mathrm{G}_{M'}$.
\end{enumerate}
\end{lemma}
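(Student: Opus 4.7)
The plan is to establish the cycle $(1)\Rightarrow(3)\Rightarrow(2)\Rightarrow(1)$, so that all three statements are equivalent.

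The implication $(3)\Rightarrow(2)$ is immediate, since by construction $\mathrm{G}_{M'}$ is a smooth tangent vector field on $TM'\backslash 0$. For $(2)\Rightarrow(1)$, I would apply the standard ODE fact that if a smooth vector field is tangent to a submanifold, then its integral curves starting on the submanifold remain on it. Applied to $\mathrm{G}_M$ restricted to $TM'\backslash 0$, this says that for any $(x,y)\in TM'\backslash 0$ the integral curve $(c(t),\dot{c}(t))$ of $\mathrm{G}_M$ is confined to $TM'\backslash 0$. Projecting to $M$, the constant-speed $F$-geodesic $c(t)$ lies in $M'$, giving exactly the totally geodesic property.

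The substantive direction is $(1)\Rightarrow(3)$. Given $(x,y)\in TM'\backslash 0$, let $c(t)$ be the constant-speed $F$-geodesic with $c(0)=x$ and $\dot{c}(0)=y$; by $(1)$ we have $c(t)\in M'$ for all small $t$. The key point is that such a curve is also a constant-speed geodesic of $(M',F|_{M'})$. I would argue this via the variational characterization: $c$ is a critical point of the $F$-arc length (equivalently, $F$-energy) functional among all smooth variations of $c$ with fixed endpoints in $M$, so in particular among variations that stay in $M'$; since $F|_{M'}$ agrees with $F$ on tangent vectors to $M'$, such $M'$-variations compute $F|_{M'}$-length, and $c$ is therefore a critical point of the $F|_{M'}$-energy as well. (Alternatively, one can localize to a geodesically convex neighborhood and use the local minimizing characterization.) Consequently, the integral curves of $\mathrm{G}_M$ and of $\mathrm{G}_{M'}$ through $(x,y)$ coincide. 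Since the value of a spray at $(x,y)$ is $\tfrac{d}{dt}|_{t=0}(c(t),\dot{c}(t))$, equality of the integral curves forces $\mathrm{G}_M(x,y)=\mathrm{G}_{M'}(x,y)$, and by the arbitrariness of $(x,y)$ we obtain $\mathrm{G}_M|_{TM'\backslash 0}=\mathrm{G}_{M'}$.

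The only real obstacle is the variational/local-minimizing step in $(1)\Rightarrow(3)$, where one must take some care in the Finslerian (potentially non-reversible) setting; however, since the energy functional depends only on the restriction of $F$ to the velocity vectors of admissible variations, and the admissible variations lying in $M'$ have velocity vectors in $TM'$, the argument goes through unchanged. The remaining implications rest on standard ODE theory applied to the geodesic spray, an invocation the lemma itself flags by referring to ``fundamental knowledge on ordinary differential equations.''
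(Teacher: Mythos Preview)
Your cycle $(1)\Rightarrow(3)\Rightarrow(2)\Rightarrow(1)$ is correct and fills in the details the paper omits: the paper does not actually prove this lemma, merely asserting that ``by fundamental knowledge on ordinary differential equations, it is easy to prove.'' Your argument is entirely in this spirit---the ODE tangency principle for $(2)\Rightarrow(1)$ and the identification of integral curves for $(1)\Rightarrow(3)$ are exactly the kind of reasoning the paper is gesturing at---so there is no meaningful difference in approach, only in explicitness.
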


Totally geodesic submanifolds provide a crucial technique in the
study of the flag curvature because of the following easy lemma
(see Proposition 5 in \cite{XD2017-1}).

\begin{lemma} \label{lemma-flag-curvature-totally-geodesic}
Let $(M',F|_{M'})$ be a totally geodesic Finsler submanifold in the
Finsler space $(M,F)$ with $\dim M'>1$. Then
for any flag triple $(x,y,\mathbf{P})$ of the totally geodesic Finsler submanifold $(M',F|_{M'})$, we have $K^F(x,y,\mathbf{P})=K^{F|_{M'}}(x,y,\mathbf{P})$.
\end{lemma}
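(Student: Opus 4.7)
The plan is to compare the ingredients of the flag curvatures $K^F(x,y,\mathbf{P})$ and $K^{F|_{M'}}(x,y,\mathbf{P})$, namely the fundamental tensor and the Riemann curvature, on $T_xM'$. Since $F|_{M'}$ is the pointwise restriction of $F$, we have $g^F_y(u,v)=g^{F|_{M'}}_y(u,v)$ for all $u,v\in T_xM'$ directly from (\ref{def-fundamental-tensor}). So the task reduces to showing that the Riemann curvature operators agree: $R^F_y u=R^{F|_{M'}}_y u$ for every $u\in T_xM'$, and in particular that the right-hand side, a priori in $T_xM$, automatically lies in $T_xM'$.

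For this I would work in adapted local coordinates. Choose $(x^1,\ldots,x^n)$ on a neighborhood $U$ of $x$ with $U\cap M'=\{x^{m'+1}=\cdots=x^n=0\}$, so that $(x^1,\ldots,x^{m'})$ are coordinates on $M'$; let Latin indices $i,j,k$ range over $1,\ldots,n$, Greek indices $\alpha,\beta$ over $1,\ldots,m'$, and reserve $a$ for transverse indices in $m'+1,\ldots,n$. Lemma \ref{lemma-total-geo-spray} asserts that the spray $\mathrm{G}_M$ is tangent to $TM'\setminus 0$ and coincides there with $\mathrm{G}_{M'}$. In terms of local geodesic coefficients, denoting those of $(M',F|_{M'})$ by $\widetilde G^\alpha$, this yields along $\{x\in M'\cap U,\ y=y^\alpha\partial_{x^\alpha}\in T_xM'\setminus 0\}$ the two identities
\[
G^\alpha(x,y)=\widetilde G^\alpha(x,y),\qquad G^a(x,y)\equiv 0,
\]
as smooth functions of the tangential variables $(x^1,\ldots,x^{m'},y^1,\ldots,y^{m'})$. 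Since the second identity holds identically on this locus, all its partial derivatives in the tangential variables $x^\beta$ and $y^\beta$ also vanish there.

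I would then substitute these identities into the coordinate formula for the Riemann curvature,
\[
R^i_k(x,y)=2\partial_{x^k}G^i-y^j\partial^2_{x^j y^k}G^i+2G^j\partial^2_{y^j y^k}G^i-\partial_{y^j}G^i\,\partial_{y^k}G^j,
\]
and check that at a point $(x,y)\in TM'|_{M'\cap U}\setminus 0$ with tangential indices $i,k\le m'$, every term equals the corresponding term of $\widetilde R^i_k$ built from $\mathrm{G}_{M'}$. The contributions with $j=a>m'$ are killed by the vanishing of $G^a$ and of its tangential derivatives, and the surviving $j\le m'$ contributions match via $G^\alpha=\widetilde G^\alpha$. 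Thus $R^F_y u=R^{F|_{M'}}_y u$ for $u\in T_xM'$, and combined with the identification of fundamental tensors we obtain $K^F(x,y,\mathbf{P})=K^{F|_{M'}}(x,y,\mathbf{P})$. The main nuisance is the bookkeeping for the last term $\partial_{y^j}G^i\,\partial_{y^k}G^j$: the $j=a$ contribution involves $\partial_{y^a}G^i$, which need not vanish, but it is multiplied by $\partial_{y^k}G^a$ with $k\le m'$, and this factor vanishes because $G^a$ is identically zero on $TM'|_{M'}$ as a function of the tangential $y$-variables. Once this case analysis is carried out the equality of flag curvatures is immediate.
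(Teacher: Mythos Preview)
The paper does not prove this lemma; it merely cites Proposition~5 in \cite{XD2017-1}. Your argument is correct and is the standard direct approach: Lemma~\ref{lemma-total-geo-spray} gives $G^\alpha=\widetilde G^\alpha$ and $G^a\equiv 0$ on $TM'|_{M'}$ in adapted coordinates, and the coordinate formula for $R^i_k$ then forces the Riemann curvatures to coincide. One small point you should make explicit: to justify that $R^F_y u\in T_xM'$ (which you correctly flag as needed, since the transverse part of $R^F_y u$ has no reason to be $g^F_y$-orthogonal to $u$), you must also check $R^a_\beta=0$ on $TM'|_{M'}$ for $a>m'$ and $\beta\le m'$. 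This follows by the same term-by-term inspection---every surviving term involves a tangential derivative of $G^a$, hence vanishes---but your write-up only carries out the case $i\le m'$.
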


It follows Lemma \ref{lemma-flag-curvature-totally-geodesic} that if $(M,F)$ has positive flag curvature, has non-negative flag curvature or satisfies the (FP) condition, so
does $(M',F|_{M'})$ respectively.

\subsection{Some basic knowledge on Lie theory}
\label{sec-lie-theory}

Let $G$ be any connected Lie group with the Lie algebra $\mathfrak{g}=\mathrm{Lie}(G)$. The radical $\mathfrak{rad}(\mathfrak{g})$ and the nilradical
$\mathfrak{nil}(\mathfrak{g})$ are
the maximal solvable ideal and the maximal nilpotent ideal
of $\mathfrak{g}$ respectively. They generate closed connected normal subgroups $\mathrm{Rad}(G)$ and $\mathrm{Nil}(G)$ respectively (see Proposition 16.2.2 in \cite{HN2012}).
By Corollary 5.4.15 in \cite{HN2012}, we have
$[\mathfrak{rad}(\mathfrak{g}),\mathfrak{g}]\subset
\mathfrak{nil}(\mathfrak{g})$.

There exists a semi-simple subalgebra $\mathfrak{s}$ of $\mathfrak{g}$, such that we have a linear decomposition $\mathfrak{g}=\mathfrak{rad}(\mathfrak{g})+\mathfrak{s}$, called
a {\it Levi decomposition} (see Theorem 5.6.6 in \cite{HN2012}). The Levi subalgebra
$\mathfrak{s}$ is unique up to
$\mathrm{Nil}(G)$-conjugations (see Theorem 5.6.13 in \cite{HN2012}).
We further decompose $\mathfrak{s}$ as a Lie algebra direct sum $\mathfrak{s}=\mathfrak{s}_c\oplus\mathfrak{s}_{nc}$, where
$\mathfrak{s}_c$ and $\mathfrak{s}_{nc}$ are the compact and non-compact part of $\mathfrak{s}$ respectively.

We call the connected Lie group $G$ or its Lie algebra $\mathfrak{g}$ {\it real reductive}, if we have a Lie algebra
direct sum $\mathfrak{g}=\mathfrak{c}(\mathfrak{g})\oplus\mathfrak{g}_c
\oplus\mathfrak{g}_{nc}$ in which  $\mathfrak{g}_c$ and $\mathfrak{g}_{nc}$ are the semi-simple ideals of compact and non-compact types respectively. We call $\mathfrak{g}^u$ the {\it compact dual} of the real reductive Lie algebra $\mathfrak{g}$, if $\mathfrak{g}^u=c(\mathfrak{g})\oplus
\mathfrak{g}_c\oplus\mathfrak{g}_{nc}^u$ where $\mathfrak{g}_{nc}^u$ is a compact dual of $\mathfrak{g}_{nc}$.  We denote  $\mathfrak{g}_{nc}=\mathfrak{k}_{nc}+\mathfrak{p}_{nc}$
the {\it Cartan decomposition} for $\mathfrak{g}_{nc}$, i.e., it satisfies
$[\mathfrak{k}_{nc},\mathfrak{p}_{nc}]\subset\mathfrak{p}_{pc}$
 and $[\mathfrak{p}_{nc},\mathfrak{p}_{nc}]\subset\mathfrak{k}_{nc}$,
then
$\mathfrak{g}^u_{nc}=\mathfrak{k}_{nc}+\sqrt{-1}\mathfrak{p}_{nc}$
is the Cartan decomposition for a compact dual $\mathfrak{g}^u_{nc}$ of $\mathfrak{g}_{nc}$ \cite{He1978}.
If we have $\mathfrak{g}_{nc}=0$ for a real reductive $\mathfrak{g}$, we call $\mathfrak{g}$ a {\it compact Lie algebra}
and $G$ a {\it quasi-compact Lie group}.

The {\it rank}
of $\mathfrak{g}$, denoted as $\mathrm{rk}\mathfrak{g}$, is the real dimension of a Cartan subalgebra in $\mathfrak{g}$. In particular,
we have $\mathrm{rk}\mathfrak{g}=\mathrm{rk}\mathfrak{g}^u=\dim\mathfrak{g}_0+
\mathrm{rk}\mathfrak{g}_c+\mathrm{rk}\mathfrak{g}_{nc}$
for a real reductive $\mathfrak{g}$ and its compact dual.

We call a subgroup $H$ {\it compactly imbedded} in
$G$, if its image $\mathrm{Ad}_\mathfrak{g}(H)\subset\mathrm{Inn}\mathfrak{g}=G/C(G)$ has a compact closure. Similarly, we call a subalgebra $\mathfrak{h}$
{\it compactly imbedded} if the connected subgroup $H$ it generates in $G$ is compactly imbedded. A maximal compactly imbedded subgroup $K$ in $G$ is a closed subgroup, and by Theorem 14.1.3 in \cite{HN2012}, it is unique up to $\mathrm{Ad}(G)$-conjugations. So for
any compactly imbedded subgroup $H$ of $G$, we can find
a maximal compactly imbedded subgroup $K$ such that $H\subset K\subset G$.

\subsection{Homogeneous Finsler space}

A connected Finsler space $(M,F)$ is called {\it homogeneous} if some connected Lie group $G$ acts isometrically and transitively on $(M,F)$ \cite{De2012}. In later discussion, we usually present a homogeneous Finsler space as $(G/H,F)$.
Here $H$ is the isotropy subgroup at $o=eH\in G/H$. We denote
$\mathfrak{g}$ and $\mathfrak{h}$ the Lie algebras of $G$ and
$H$ respectively.

If the $G$-action on $G/H$ is effective, i.e., $G$ is a subgroup of the connected isometry group $I_0(G/H,F)$, $H$ is compactly imbedded in $G$. Furthermore, if $G$ is a closed subgroup of $I_0(G/H,F)$, $H$ is compact itself.

These basic observations imply that we can always find an
$\mathrm{Ad}(H)$-invariant linear direct sum $\mathfrak{g}=\mathfrak{h}+\mathfrak{m}$, which is called
a {\it reductive decomposition}. For example, when the $G$-action
on $(G/H,F)$ is effective, the restriction of the Killing form
$B_\mathfrak{g}$ of $\mathfrak{g}$ to $\mathfrak{h}$ is non-degenerate, so the $B_\mathfrak{g}$-orthogonality provides a special reductive decomposition $\mathfrak{g}=\mathfrak{h}+\mathfrak{m}$ satisfying
$\mathfrak{nil}(\mathfrak{g})\subset\mathfrak{m}$ (see Lemma 2 and Remark 1 in \cite{Ni2017}).

Given a reductive decomposition $\mathfrak{g}=\mathfrak{h}+\mathfrak{m}$
for the homogeneous Finsler space
$(G/H,F)$, the subspace $\mathfrak{m}$ can be identified with
the tangent space at $o=eH$, such that the $\mathrm{Ad}(H)$-action on $\mathfrak{m}$ coincides with the
isotropy $H$-action on $T_{o}(G/H)$. The $G$-invariant Finsler metric $F$ is one-to-one determined by its restriction to $T_{o}(G/H)$, i.e., an $\mathrm{Ad}(H)$-invariant Minkowski norm
on $\mathfrak{m}$ \cite{De2012}.

For any $u\in\mathfrak{g}$, we denote $u=u_\mathfrak{h}+u_\mathfrak{m}$ with $u_\mathfrak{h}\in\mathfrak{h}$ and
$u_\mathfrak{m}\in\mathfrak{m}$ the decomposition of $u$,
and $\mathrm{pr}_\mathfrak{h}(u)=u_\mathfrak{h}$
and $\mathrm{pr}_\mathfrak{m}(X)=u_\mathfrak{m}$
the corresponding linear projections, for a given
reductive decomposition $\mathfrak{g}=\mathfrak{h}+\mathfrak{m}$.

The flag curvature of homogeneous Finsler space is an important topic which has been extensively studied in recent years \cite{DX-survey,Huang17}.
The general homogeneous flag curvature formula was given in
\cite{Huang2015}.
However, for a globally symmetric Finsler space $(G/H,F)$ \cite{De2012}, the flag curvature formula can be much simplified
as following.

\begin{lemma}\label{prop-symmetric-space-non-compact}
Assume that the homogeneous Finsler space $(G/H,F)$ admits
a Cartan decomposition, i.e., a reductive decomposition
$\mathfrak{g}=\mathfrak{h}+\mathfrak{m}$
satisfying
$[\mathfrak{m},\mathfrak{m}]\subset\mathfrak{h}$, then for
any linearly independent pair $u,v\in\mathfrak{m}=T_o(G/H)$
and the tangent plane they span, we have
$$K^F(o,u,\mathbb{R})=\frac{\langle [[v,u],v],u\rangle_u^F}{
\langle u,u\rangle_u^F\langle v,v\rangle_u^F-
\langle u,v\rangle_u^F\langle u,v\rangle_u^F}.$$
\end{lemma}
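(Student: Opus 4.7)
The plan is to exploit the strong structural consequences of the Cartan condition $[\mathfrak{m},\mathfrak{m}]\subset\mathfrak{h}$ in combination with the general homogeneous flag curvature formula of L.~Huang mentioned just before the lemma (see \cite{Huang2015}). Huang's formula expresses $K^F(o,u,\mathbf{P})$ for a reductive decomposition $\mathfrak{g}=\mathfrak{h}+\mathfrak{m}$ as a combination of three kinds of terms: (i) terms involving the $\mathfrak{m}$-component of brackets, i.e.\ $[u,v]_\mathfrak{m}$ and the fundamental tensor pairings built from it, (ii) the ``algebraic'' term $\langle[[v,u]_\mathfrak{h},u]_\mathfrak{m},v\rangle_u^F$ (with sign depending on conventions), and (iii) correction terms coming from the spray vector $\eta(u)\in\mathfrak{m}$, determined by $2\langle\eta(u),w\rangle_u^F=\langle u,[w,u]_\mathfrak{m}\rangle_u^F+\langle[w,u]_\mathfrak{m},u\rangle_u^F$ for all $w\in\mathfrak{m}$, which measure the failure of $t\mapsto\exp(tu)\cdot o$ to be a geodesic in the general reductive situation.

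First I would observe that the Cartan condition forces $[u,v]_\mathfrak{m}\equiv 0$ on $\mathfrak{m}\times\mathfrak{m}$. Hence every contribution of type (i) drops out, and moreover $\eta(u)=0$, so the type (iii) corrections vanish as well; as a by-product, $t\mapsto\exp(tu)\cdot o$ is automatically a constant speed geodesic through $o$ with initial velocity $u$, which is consistent with (and in fact implicit in) the expected formula. Only the purely algebraic contribution of type (ii) survives.

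Next I would identify the surviving term. Under the Cartan condition we have $[v,u]\in\mathfrak{h}$ automatically, so $[v,u]_\mathfrak{h}=[v,u]$ and $[[v,u],u]\in\mathfrak{m}$. Using the $\mathrm{Ad}(H)$-invariance of the fundamental tensor $g_u^F$ (which is inherited from the $\mathrm{Ad}(H)$-invariance of the Minkowski norm on $\mathfrak{m}$) and the standard skew-symmetry relations for $\mathrm{ad}$ on $\mathfrak{g}$, one rewrites $\langle[[v,u],u]_\mathfrak{m},v\rangle_u^F$ as $\langle[[v,u],v],u\rangle_u^F$, giving the stated numerator. The denominator is the usual Finsler flag denominator $\langle u,u\rangle_u^F\langle v,v\rangle_u^F-\langle u,v\rangle_u^F\langle u,v\rangle_u^F$, which is unaffected by the decomposition.

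The main obstacle is bookkeeping rather than substance: Huang's formula carries several sign conventions tied to the normalization of the spray and to the ordering inside the fundamental tensor pairings, and one must track these carefully to be sure that, after the vanishing of the $[u,v]_\mathfrak{m}$ and $\eta(u)$ terms, what remains matches exactly $\langle[[v,u],v],u\rangle_u^F$ rather than $\langle[[v,u],u],v\rangle_u^F$ (the two differ by a sign in the Finsler setting because $g_u^F$ is only bilinear at the fixed pole $u$ and does not enjoy the full symmetries of a Riemannian metric). A useful sanity check, which I would run at the end, is to specialize to the Riemannian case and recover the classical symmetric space formula $K(u,v)=\langle[[u,v],v],u\rangle/(\|u\|^2\|v\|^2-\langle u,v\rangle^2)$, thereby fixing all ambiguous signs.
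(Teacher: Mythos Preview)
Your proposal is correct and follows exactly the route the paper indicates: the lemma is stated immediately after the sentence ``The general homogeneous flag curvature formula was given in \cite{Huang2015}. However, for a globally symmetric Finsler space $(G/H,F)$ \cite{De2012}, the flag curvature formula can be much simplified as following,'' and no separate proof is given. So the intended derivation is precisely your specialization of Huang's formula under $[\mathfrak{m},\mathfrak{m}]\subset\mathfrak{h}$, using that $[u,v]_\mathfrak{m}=0$ and hence $\eta\equiv 0$ (compare the paper's defining equation for $\eta$), leaving only the $\mathfrak{h}$-bracket term.
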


In particular, we see immediately that any globally
symmetric Finsler space $(G/H,F)$ which rank is bigger than 1 does not satisfy the (FP) condition, because we can find a commutative subalgebra
$\mathbf{P}\subset\mathfrak{m}=T_o(G/H)$ with respect to a Cartan decomposition for $G/H$, such that $K^F(o,u,\mathbf{P})=0$ for any nonzero vector $u\in\mathbf{P}$.

When $(G/H,F)$
is a globally symmetric Finsler space of non-compact type, i.e., $\mathfrak{g}$ is a
semi-simple Lie algebra of non-compact type and $\mathfrak{h}$ is maximal compactly imbedded in
$\mathfrak{g}$, it is not hard to prove by Theorem 5.8 in \cite{De2012} or Lemma \ref{prop-symmetric-space-non-compact}
that $(G/H,F)$ has non-positive flag curvature and negative Ricci scalar. So in this case $(G/H,F)$
 can not be non-negatively curved.
\section{Some techniques for studying g.o.~Finsler space}

\subsection{Homogeneous geodesic and g.o.~Finsler space}

Let $(M,F)$ be a connected Finsler space on which a connected Lie group
$G$ acts isometrically. We call a non-constant geodesic $c(t)$ {\it $G$-homogeneous}, if $c(t)=\exp tw\cdot x$ for some $w\in\mathfrak{g}=\mathrm{Lie}(G)$ and $x\in M$. It is obvious to see that
any $G$-homogeneous geodesic $c(t)$ has a positive constant speed.
We call $(M,F)$ a {\it $G$-geodesic orbit Finsler space} ({\it$G$-g.o.~Finsler space} in short) if each constant speed geodesic on $(M,F)$ is $G$-homogeneous \cite{YD2014}. If the group $G$ has not been specified, the connected isometry group $I_0(M,F)$ is automatically chosen.

Since any connected $G$-g.o.~Finsler space $(M,F)$ is a homogeneous Finsler space on which $G$ acts isometrically and transitively, we may present it as $(M,F)=(G/H,F)$, in which $H$ is the isotropy subgroup at $o=eH\in G/H$. We call $X\in\mathfrak{g}$ a {\it $G$-geodesic vector} if $\exp tX\cdot o$ is a non-constant geodesic. When $G\subset I_0(M,F)$, i.e., $G$ acts effectively on $M$, we can find a
reductive decomposition $\mathfrak{g}=\mathfrak{h}+\mathfrak{m}$.
Reformulating Proposition 3.1 in \cite{Xu2018}, we
have the following lemma.

\begin{lemma}\label{lemma-go-equi-description}
Let $(G/H,F)$ be a homogeneous Finsler space with a reductive decomposition $\mathfrak{g}=\mathfrak{h}+\mathfrak{m}$. Then
for any $G$-geodesic vector $w\in\mathfrak{g}$, $\mathrm{pr}_\mathfrak{m}(w)$ is a nonzero vector in $\mathfrak{m}$, and
$(G/H,F)$ is a $G$-g.o.~Finsler space iff the linear projection $\mathrm{pr}_\mathfrak{m}$ maps
the subset of all $G$-geodesic vectors onto $\mathfrak{m}\backslash \{0\}$.
Further more, the following statements are equivalent for a nonzero vector $u\in\mathfrak{m}$:
\begin{enumerate}
\item There exists some $G$-geodesic vector $w$, such that $\mathrm{pr}_\mathfrak{m}(w)=u$;
\item There exists some $v\in\mathfrak{h}$ such that
$\langle[u+v,\mathfrak{m}]_\mathfrak{m},u
\rangle^F_{u}=0$;
\item The spray vector field $\eta(\cdot)$ is tangent to the $\mathrm{Ad}(H)$-orbit at $u$.
\end{enumerate}
\end{lemma}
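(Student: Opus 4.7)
The plan is to dispose of the preliminary assertions first and then establish the three-way equivalence.

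For the preliminary claims: if $w\in\mathfrak{g}$ is a $G$-geodesic vector, the curve $c(t)=\exp(tw)\cdot o$ is a non-constant geodesic, hence $\dot c(0)\neq 0$ in $T_o(G/H)$. Under the identification $T_o(G/H)\cong\mathfrak{m}$ induced by the reductive decomposition, this initial velocity coincides with $\mathrm{pr}_\mathfrak{m}(w)$, so $\mathrm{pr}_\mathfrak{m}(w)\neq 0$. Since $G$ acts transitively, every constant speed geodesic of $(G/H,F)$ can be $G$-translated to pass through $o$ with any prescribed nonzero initial velocity in $\mathfrak{m}$; the $G$-g.o.\ condition is therefore equivalent to the surjectivity of $\mathrm{pr}_\mathfrak{m}$ from the set of $G$-geodesic vectors onto $\mathfrak{m}\setminus\{0\}$.

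For (1)$\Leftrightarrow$(2), I would invoke the Finsler analogue of Kostant's geodesic vector criterion (Latifi's formula): writing $w=u+v$ with $u=\mathrm{pr}_\mathfrak{m}(w)$ and $v\in\mathfrak{h}$, the homogeneous curve $\exp(tw)\cdot o$ is a geodesic iff
\[
\langle[w,x]_\mathfrak{m},u\rangle^F_u=0\quad\text{for every }x\in\mathfrak{m}.
\]
This identity is obtained from the Euler--Lagrange equations for $F^2$ along the two-parameter family $\exp(tw)\cdot\exp(sx)\cdot o$ and the restriction of test vectors to $\mathfrak{m}$ via the reductive splitting, with $\mathrm{Ad}(H)$-invariance of $F$ absorbing the $\mathfrak{h}$-component of the test. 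This is precisely condition (2).

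For (2)$\Leftrightarrow$(3), I would recall that the spray vector field $\eta\colon\mathfrak{m}\setminus\{0\}\to\mathfrak{m}$ is characterized by
\[
\langle\eta(u),x\rangle^F_u=\langle u,[x,u]_\mathfrak{m}\rangle^F_u\qquad\text{for all }x\in\mathfrak{m},
\]
and that the tangent space to the orbit $\mathrm{Ad}(H)\cdot u$ at $u$ is $[\mathfrak{h},u]$ (since $[\mathfrak{h},\mathfrak{m}]\subset\mathfrak{m}$). Splitting $[u+v,x]_\mathfrak{m}=[u,x]_\mathfrak{m}+[v,x]$ in condition (2) and using the defining identity of $\eta$ converts the first summand into $-\langle\eta(u),x\rangle^F_u$. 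For the second, I would differentiate the $\mathrm{Ad}(H)$-invariance identity
\[
\langle\mathrm{Ad}(h)a,\mathrm{Ad}(h)b\rangle^F_{\mathrm{Ad}(h)u}=\langle a,b\rangle^F_u
\]
at $h=e$ in direction $v\in\mathfrak{h}$, observing that the variation of the fundamental tensor at the basepoint vanishes on arguments involving $u$ (by the homogeneity identity $\partial_{y^k}g^F_{ij}\cdot y^j=0$), to obtain $\langle[v,x],u\rangle^F_u=-\langle x,[v,u]\rangle^F_u$. Substituting, condition (2) becomes $\langle\eta(u)+[v,u],x\rangle^F_u=0$ for all $x\in\mathfrak{m}$, equivalent by nondegeneracy of $g^F_u$ on $\mathfrak{m}$ to $\eta(u)=-[v,u]\in[\mathfrak{h},u]$, which is (3).

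The main obstacle I expect is the algebraic identity $\langle[v,x],u\rangle^F_u+\langle x,[v,u]\rangle^F_u=0$ for $v\in\mathfrak{h}$: although it follows from infinitesimal $\mathrm{Ad}(H)$-invariance combined with a Cartan-tensor vanishing, the Finsler fundamental tensor has a genuinely nontrivial dependence on the basepoint, so one must verify that the basepoint-derivative terms drop out precisely because one of the three arguments is the basepoint $u$ itself. Once this identity is secured, the chain (1)$\Leftrightarrow$(2)$\Leftrightarrow$(3) is immediate.
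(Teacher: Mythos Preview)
Your argument is correct. The preliminary assertions are handled appropriately, the equivalence (1)$\Leftrightarrow$(2) is precisely Latifi's criterion for geodesic vectors, and your derivation of (2)$\Leftrightarrow$(3) is sound: the infinitesimal $\mathrm{Ad}(H)$-invariance of the fundamental tensor yields $2C_u([v,u],x,u)+\langle[v,x],u\rangle^F_u+\langle x,[v,u]\rangle^F_u=0$, and the Cartan tensor term vanishes by the homogeneity identity $C_y(\cdot,\cdot,y)=0$, exactly as you anticipated in your ``main obstacle'' paragraph. The remaining algebra then gives $\eta(u)=-[v,u]\in[\mathfrak{h},u]$, which is the tangency condition (3).

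As for comparison with the paper: the paper does not prove this lemma at all. It is introduced as a reformulation of Proposition~3.1 in \cite{Xu2018} and simply cited, with only the definition of the spray vector field $\eta$ recalled afterwards. So your proposal supplies a complete self-contained proof where the paper defers to an external reference; the route you take (Latifi's criterion plus the invariance/Cartan-tensor computation) is the natural one and is essentially what underlies the cited result.
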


Here the spray vector field $\eta:\mathfrak{m}\backslash\{0\}\rightarrow\mathfrak{m}$
for a homogeneous Finsler space $G/H$ with a reductive decomposition $\mathfrak{g}=\mathfrak{h}+\mathfrak{m}$ is
defined by
\begin{equation}\label{0001}
\langle\eta(y),u\rangle_y^F=
\langle y,[u,y]_\mathfrak{m}\rangle_y^F,\quad\forall u\in\mathfrak{m},
\end{equation}
where $\langle\cdot,\cdot\rangle_y^F$ is the fundamental tensor
for the $\mathrm{Ad}(H)$-invariant Minkowski norm $F(o,\cdot)$ on $\mathfrak{m}$
\cite{Huang2015}.

Notice that the same Finsler space $(M,F)$ may have g.o.~ properties with respect to different isometric group actions.
For example, the standard Riemannian metric on a unit sphere $S^n(1)$
is $G$-g.o.~for any connected subgroup $G\subset SO(n+1)$ which acts transitively on $S^n(1)$ \cite{BN2014,Ni2013}.

A Finsler space $(M,F)$ is $G$-g.o.~iff it is $G'$-g.o.~in which
$G'$ is the image of $G$ in $I(M,F)$. Generally speaking, the smaller is $\mathfrak{g}'=\mathrm{Lie}(G')$, the stronger is
the corresponding $G$-g.o.~property.

In a previous work, we have proved the following lemma (see Lemma 4.2 in \cite{Xu2018}).

\begin{lemma}\label{lemma-go-commuting-isometries}
Let $(G/H, F)$ be a connected G-geodesic orbit space, and $X$ a smooth vector field on $G/H$ commuting
with all the Killing vector fields in $\mathfrak{g} = \mathrm{Lie}(G)$, then $X$ is a Killing vector field on $(G/H, F)$.
\end{lemma}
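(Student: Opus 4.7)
The plan is to show that the local flow $\phi_t$ of $X$ consists of $F$-isometries, which is equivalent to $X$ being Killing. Because $[X,w]=0$ for every Killing field $w\in\mathfrak{g}$, the flow commutes with the left $G$-action: $\phi_t(g\cdot p)=g\cdot\phi_t(p)$ for all $g\in G$ and $p\in G/H$. Writing $\phi_t(o)=g_t\cdot o$ for a smooth curve $g_t\in G$ with $g_0=e$, the requirement that $\phi_t$ be well-defined on cosets forces $g_t\in N_G(H)$, so $\phi_t$ acts as right translation by $g_tH$ on $G/H$, and $\{g_tH\}$ is a one-parameter subgroup of $N_G(H)/H$. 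The problem therefore reduces to showing that this right action preserves $F$.

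Next I would consider the pullback metric $F_t:=\phi_t^{*}F$. Because $\phi_t$ commutes with $G$, each $F_t$ is $G$-invariant; because $\phi_t$ is a diffeomorphism sending $F$-geodesics to $F_t$-geodesics while preserving the $G$-homogeneous structure of curves $s\mapsto\exp(sw)\cdot p$, every $F_t$-geodesic is again $G$-homogeneous, so $(G/H,F_t)$ is likewise $G$-g.o. For any $u\in\mathfrak{m}\setminus\{0\}$, I pick via Lemma~\ref{lemma-go-equi-description}(2) some $v\in\mathfrak{h}$ so that $w:=u+v$ is a $G$-geodesic vector for $F$ at $o$. Since $\phi_{t*}w=w$, the image $\phi_t(\exp(sw)\cdot o)=\exp(sw)\cdot\phi_t(o)=g_t\cdot\exp(s\,\mathrm{Ad}(g_t^{-1})w)\cdot o$ is simultaneously an integral curve of the Killing field $w$ and an $F_t$-geodesic. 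Equating the $F_t$-speed of the original curve $\exp(sw)\cdot o$ with the $F$-speed of its image (immediate from $F_t=\phi_t^{*}F$) produces the compatibility identity $F_t(u)=F(\mathrm{pr}_{\mathfrak{m}}\mathrm{Ad}(g_t^{-1})w)$ for every $G$-geodesic vector $w=u+v$.

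The final step, which is the main obstacle, is to upgrade this compatibility into the pointwise equality $F_t=F$. I would differentiate the g.o.\ identity of Lemma~\ref{lemma-go-equi-description}(2) for $F_t$ at $t=0$: the derivative of the smooth family $F_t|_{\mathfrak{m}}$ of $\mathrm{Ad}(H)$-invariant Minkowski norms encodes the Lie derivative $\mathcal{L}_X F|_o$, and the differentiated identity, combined with the fact that the projections of geodesic vectors cover $\mathfrak{m}\setminus\{0\}$, forces this derivative to vanish on all of $\mathfrak{m}\setminus\{0\}$. Integrating in $t$ then yields $F_t\equiv F$, so each $\phi_t$ is an $F$-isometry and $X$ is Killing. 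The rigidity here is not automatic: two $G$-invariant Finsler metrics sharing the same $G$-homogeneous geodesic curves need not coincide (as already occurs on symmetric spaces of higher rank), so the g.o.\ identity of Lemma~\ref{lemma-go-equi-description} must be used in an essential pointwise way and not merely through the abstract observation that $F_t$ is again $G$-g.o.
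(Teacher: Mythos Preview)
The paper does not actually prove this lemma; it quotes it as Lemma~4.2 of \cite{Xu2018}. So there is no in-paper argument to compare against, and your proposal has to stand on its own.

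Your reduction is correct through the penultimate step: $\phi_t$ commutes with $G$, each $F_t=\phi_t^{*}F$ is a $G$-invariant (and $G$-g.o.) metric, and it suffices to show $(\mathcal L_XF)(o,\cdot)=0$. The gap is in your endgame. ``Differentiate the g.o.\ identity of Lemma~\ref{lemma-go-equi-description}(2) for $F_t$ at $t=0$'' drags in both the $t$-variation of the fundamental tensor $g_u^{F_t}$ and a choice $v_t\in\mathfrak h$ that need not depend smoothly on $t$; there is no visible cancellation that would force $\tfrac{d}{dt}\big|_{0}F_t(u)=0$. You correctly flag that merely knowing $F_t$ is again $G$-g.o.\ is insufficient, but you do not supply the pointwise mechanism.

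What is missing is one algebraic observation that makes Lemma~\ref{lemma-go-equi-description}(2) work for $F$ itself, with no $t$-differentiation of the identity. Since $X$ is $G$-invariant, $u_0:=X(o)\in\mathfrak m$ is $\mathrm{Ad}(H)$-fixed, so $[\mathfrak h,u_0]=0$; in particular one may take $g_t=\exp(tu_0)\in N_G(H)$. Then
\[
(\mathcal L_XF)(o,u)=\tfrac{d}{dt}\Big|_{0}F\bigl(o,\mathrm{pr}_{\mathfrak m}\mathrm{Ad}(\exp(-tu_0))u\bigr)
=-\tfrac{1}{F(u)}\,\langle u,[u_0,u]_{\mathfrak m}\rangle_u^{F}.
\]
Now use Lemma~\ref{lemma-go-equi-description}(2) for $F$: choose $v\in\mathfrak h$ with $\langle[u+v,a]_{\mathfrak m},u\rangle_u^{F}=0$ for every $a\in\mathfrak m$, and set $a=u_0$. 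Because $[v,u_0]=0$, this gives $\langle[u,u_0]_{\mathfrak m},u\rangle_u^{F}=0$, hence $(\mathcal L_XF)(o,u)=0$ for all $u$, and by $G$-invariance $\mathcal L_XF\equiv 0$. The point you never invoke is precisely that $u_0\in\mathfrak m^{H}$: this is what singles out $a=u_0$ as the test vector for which the $\mathfrak h$-part of the geodesic vector drops out of the g.o.\ identity.
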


Lemma \ref{lemma-go-commuting-isometries} can be applied to
discuss the following simple example.
\begin{lemma}\label{lemma-left-invariant-go}
Assume that the connected Lie group $G$ admits a left invariant Finsler metric $F$ which is g.o.~with respect to $G=L(G)$ for all the left multiplications, then $F$ is bi-invariant
and $G$ has a compact Lie algebra.
\end{lemma}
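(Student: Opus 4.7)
The plan is to split the lemma into two independent claims: first that $F$ is bi-invariant, and second that an $\mathrm{Ad}(G)$-invariant Minkowski norm on $\mathfrak{g}$ forces $\mathfrak{g}$ to be a compact Lie algebra.

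For the first claim, I would identify carefully which vector fields on $G$ generate which one-parameter groups of diffeomorphisms. The Killing vector fields generating the isometric $L(G)$-action are exactly the right-invariant vector fields on $G$: the infinitesimal generator of $t\mapsto L_{\exp(tu)}$ at $h$ is $(dR_h)_e(u)$. On the other hand, any left-invariant vector field $X^L$ on $G$ has flow $\phi_t(g)=g\exp(tX_e)$, which is a right translation. Since left and right translations of $G$ commute with each other, $X^L$ commutes with every element of $\mathfrak{g}=\mathrm{Lie}(L(G))$ regarded as a Killing field on $(G,F)$. Applying Lemma \ref{lemma-go-commuting-isometries} to the $G$-g.o.~space $(G,F)$, every left-invariant vector field is itself Killing, so every right translation is an isometry of $F$. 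Together with the assumed left-invariance, this yields that $F$ is bi-invariant.

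For the second claim, bi-invariance means that the Minkowski norm $F_e:=F(e,\cdot)$ on $\mathfrak{g}=T_eG$ is $\mathrm{Ad}(G)$-invariant. I would then study its unit ball $B=\{v\in\mathfrak{g}:F_e(v)\le 1\}$, which by the Minkowski axioms is a compact convex body with $0$ in its interior. The linear symmetry group $\mathrm{Aut}(B)=\{T\in GL(\mathfrak{g}):T(B)=B\}$ is compact: since $B$ is sandwiched between two Euclidean balls $B_r(0)\subset B\subset B_R(0)$, both $T$ and $T^{-1}$ have operator norm at most $R/r$ for each $T\in\mathrm{Aut}(B)$, so $\mathrm{Aut}(B)$ is a closed and bounded, hence compact, subset of $\mathrm{End}(\mathfrak{g})$. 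Therefore the closure $\overline{\mathrm{Ad}(G)}\subset\mathrm{Aut}(B)$ is compact.

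Finally, averaging any Euclidean inner product on $\mathfrak{g}$ over the compact group $\overline{\mathrm{Ad}(G)}$ yields an $\mathrm{Ad}(G)$-invariant positive-definite inner product; with respect to this inner product, each $\mathrm{ad}(u)$ with $u\in\mathfrak{g}$ is skew-symmetric, so the Killing form $B_\mathfrak{g}$ is negative semi-definite and $\mathfrak{g}$ decomposes as $\mathfrak{c}(\mathfrak{g})\oplus\mathfrak{g}_c$ in the notation of Section \ref{sec-lie-theory}. In other words, $\mathfrak{g}$ is a compact Lie algebra and $G$ is quasi-compact. I do not foresee a serious obstacle; the only delicate point is the bookkeeping matching left/right translations with their respective classes of invariant vector fields, after which Lemma \ref{lemma-go-commuting-isometries} does the heavy lifting and the remaining step is a standard compactness argument for the linear automorphism group of a convex body.
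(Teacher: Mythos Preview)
Your proof is correct and follows essentially the same route as the paper: both use Lemma~\ref{lemma-go-commuting-isometries} applied to left-invariant vector fields (which commute with the right-invariant Killing fields of $L(G)$) to get bi-invariance, and then pass from the $\mathrm{Ad}(G)$-invariant Minkowski norm $F(e,\cdot)$ to an $\mathrm{Ad}(G)$-invariant inner product by averaging. The only difference is that you spell out the compactness of $\mathrm{Aut}(B)$ and the averaging explicitly, whereas the paper simply cites \cite{XD2017-1} for this step.
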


\begin{proof}
Killing vector fields on $(G,F)$ in the Lie algebra of $L(G)$ are right invariant vector fields on $G$. Let $X$ be any left invariant vector field on $G$. Since $X$
commutes with all right invariant vector fields on $G$, by Lemma
\ref{lemma-go-commuting-isometries},  $X$ is also a Killing vector field on $(G,F)$. So $F$ is
bi-invariant, and then $F(e,\cdot)$ is an $\mathrm{Ad}(G)$-invariant Minkowski norm on $\mathfrak{g}=\mathrm{Lie}(G)$. By an averaging process, we can use $F(e,\cdot)$ to construct an
$\mathrm{Ad}(G)$-invariant inner product on $\mathfrak{g}$ \cite{XD2017-1}. So
$\mathfrak{g}$ is compact, which
ends the proof of this lemma.
\end{proof}
\subsection{Totally geodesic techniques}
\label{sec-total-geo}
We have mentioned that fixed point sets of isometries provide totally geodesic submanifolds, so they are crucial for studying the flag curvature in Finsler geometry.
This technique is also important for studying the g.o.~manifolds
because of the following lemma.

\begin{lemma}\label{lemma-go-fix-point-tech}
Let $G$ be a connected Lie group, $(G/H,F)$ be a connected $G$-g.o.~Finsler space, and $L$ a subset in $H$. Then $\mathrm{Fix}_o(L,G/H)$, the connected component of the fixed point set of $L$ in $G/H$ containing $o=eH$, is a totally geodesic submanifold in $(G/H,F)$, and
the submanifold metric $F|_{\mathrm{Fix}_o(L,G/H)}$ is a $G'$-g.o.~ Finsler metric, in which $G'$ is the identity component of the centralizer $C_G(L)$ of $L$ in $G$.
\end{lemma}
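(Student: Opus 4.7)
The plan has two parts: first show $N := \mathrm{Fix}_o(L,G/H)$ is totally geodesic in $(G/H,F)$, then show $F|_N$ is $G'$-g.o.~with $G' := (C_G(L))_0$. The first part is essentially a citation: each $l \in L \subset H$ acts on $G/H$ as a Finsler isometry by left translation, so by the principle recalled just before Lemma~\ref{lemma-total-geo-spray} the common fixed point set of $L$ is a disjoint union of closed totally geodesic Finsler submanifolds, and $N$ is one connected component. Moreover $G'$ preserves $N$: whenever $g \in G'$, $l \in L$ and $x \in N$, one has $lgx = glx = gx$, and $G'$ is connected.

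For the $G'$-g.o.~property, take an arbitrary constant speed geodesic $c(t)$ of $(N,F|_N)$. By total geodesicity (Lemma~\ref{lemma-total-geo-spray}) it is also a constant speed geodesic of $(G/H,F)$, so the $G$-g.o.~hypothesis supplies some $w \in \mathfrak{g}$ with $c(t) = \exp(tw)\cdot c(0)$. Since $c(t) \in N$ and $L$ fixes $c(0)$, the conjugation identity $l\exp(tw)l^{-1} = \exp(t\,\mathrm{Ad}(l)w)$ yields
\[
\exp\bigl(t\,\mathrm{Ad}(l)w\bigr)\cdot c(0) = l\exp(tw)l^{-1}\cdot c(0) = l\cdot c(t) = c(t) \qquad\text{for all } t\in\mathbb{R},\ l \in L.
\]
Differentiating in $t$ shows that along the entire curve $c$ the Killing vector fields on $G/H$ induced by $w$ and by $\mathrm{Ad}(l)w$ coincide.

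Next I would average. After reducing to the effective case, $H$ is compactly imbedded in $G$, so the closure $\bar L := \overline{\mathrm{Ad}_\mathfrak{g}(L)}$ is a compact subgroup of $\overline{\mathrm{Ad}_\mathfrak{g}(H)} \subset GL(\mathfrak{g})$. Setting $w' := \int_{\bar L}\ell\cdot w\, d\ell$ against normalized Haar measure on $\bar L$, one obtains $w' \in \mathfrak{c}_\mathfrak{g}(L) = \mathrm{Lie}(G')$, and by linearity the Killing vector field of $w'$ still coincides with that of $w$ at every point of $c$. Uniqueness of integral curves of a smooth vector field then forces $c(t) = \exp(tw')\cdot c(0)$, and since $G'$ is connected we have $\exp(tw') \subset G'$. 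Hence every constant speed geodesic of $(N,F|_N)$ is $G'$-homogeneous, which is precisely the defining property of $G'$-g.o.

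The main technical point is the compactness of $\bar L$ needed for the averaging; this reduces to the standard fact that the isotropy subgroup at any point of the full Finsler isometry group is compact, together with the compact imbeddedness of $H$. All remaining steps are routine manipulations of one-parameter subgroups combined with ODE uniqueness.
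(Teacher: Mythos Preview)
Your proposal is correct and follows essentially the same route as the paper: both arguments cite the fixed-point-set principle for total geodesicity, then for the $G'$-g.o.~property take a geodesic $c(t)=\exp(tw)\cdot c(0)$ given by the ambient g.o.~hypothesis and average $w$ over a compact closure of $L$ (the paper takes the closure in $I_0(G/H,F)$, you take the closure of $\mathrm{Ad}_\mathfrak{g}(L)$ in $GL(\mathfrak{g})$ via compact imbeddedness of $H$) to land in $\mathfrak{c}_\mathfrak{g}(L)=\mathrm{Lie}(G')$. Your added justification that the Killing fields of $w$ and $\mathrm{Ad}(l)w$ agree along $c$, followed by ODE uniqueness, makes explicit what the paper leaves as a one-line claim; otherwise the two arguments are the same.
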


Notice that if $L$ is a closed connected subgroup of $H$, the
group $G'$ in Lemma \ref{lemma-go-fix-point-tech} can be replaced
by the quotient Lie group $G'/L$.

Lemma \ref{lemma-go-fix-point-tech} is a reformulation of
Lemma 3.4
in \cite{XDY-preprint}.
To be more self contained, we briefly sketch its proof here.

It is obvious that $\mathrm{Fix}_o(L,G/H)$ is totally geodesic in $(G/H,F)$, with an isometric $G'$-action.

To prove the $G'$-g.o.~property, we may simply
assume $L$ is a closed subgroup in $H$.
For any geodesic of positive constant speed $c(t)$ on
$\mathrm{Fix}_o(L,G/H)$, we may present it as $c(t)=\exp(tX)\cdot x$
for some $X\in\mathfrak{g}$. We can average
$\mathrm{Ad}(g)X$ for all $g\in L$, to get a vector $\overline{X}\in\mathfrak{g}'=\mathrm{Lie}(G')=
\mathfrak{c}_\mathfrak{g}(L)$ which still satisfies
$c(t)=\exp(t\overline{X})\cdot x$. Then the claim of the $G'$-g.o.
property is proved.

To make the averaging argument precise, we need to switch to the closure
$\overline{L}$ of $L$ in $I_0(G/H,F)$. Then the $\mathrm{Ad}(\overline{L})$-actions preserves ${\mathfrak{g}}$. Further more
$\overline{L}$ is compact, so
we can find a bi-invariant volume form $d\mathrm{Vol}$ on $\overline{L}$
with
$\int_{\overline{L}}d\mathrm{Vol}=1$.
Then for any $X\in\mathfrak{g}$, the average
$\overline{X}=\int_{g\in\overline{L}}\mathrm{Ad}(g)X d\mathrm{Vol}_{\overline{L}}$
meets our requirement.

In \cite{XDY-preprint}, we have also proved the following totally geodesic technique.

\begin{lemma}\label{lemma-go-total-geod-tech}
Let $(G/H,F)$ be a connected $G$-g.o.~Finsler space in which both $G$ and $H$ are connected, and $G'$
a connected subgroup of $G$ containing  $H$, then $G'/H$ is a
totally geodesic submanifold in $(G/H,F)$, and a $G'$-g.o.~Finsler
space itself.
\end{lemma}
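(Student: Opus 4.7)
The plan is to reduce both claims---that $G'/H$ is totally geodesic and that it carries a $G'$-g.o.~structure---to a single algebraic observation, after arranging compatible reductive decompositions of $G/H$ and $G'/H$.

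First I would fix a reductive decomposition $\mathfrak{g}=\mathfrak{h}+\mathfrak{m}$ for $(G/H,F)$ and set $\mathfrak{m}'=\mathfrak{g}'\cap\mathfrak{m}$. Since $\mathfrak{h}\subset\mathfrak{g}'$ and $\mathfrak{g}'$ is a Lie subalgebra, each $X\in\mathfrak{g}'$ decomposes as $X=X_\mathfrak{h}+X_\mathfrak{m}$ with $X_\mathfrak{m}=X-X_\mathfrak{h}\in\mathfrak{g}'\cap\mathfrak{m}=\mathfrak{m}'$, giving the linear direct sum $\mathfrak{g}'=\mathfrak{h}+\mathfrak{m}'$. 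The $\mathrm{Ad}(H)$-invariance of $\mathfrak{m}'$ follows from that of $\mathfrak{m}$ combined with $\mathrm{Ad}(H)\mathfrak{g}'\subset\mathfrak{g}'$ (which holds since $H\subset G'$). Thus $\mathfrak{g}'=\mathfrak{h}+\mathfrak{m}'$ is a reductive decomposition for $(G'/H,F|_{G'/H})$, and under the usual identifications $T_o(G'/H)=\mathfrak{m}'\subset\mathfrak{m}=T_o(G/H)$.

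Next, for any nonzero $y\in T_o(G'/H)=\mathfrak{m}'$, Lemma \ref{lemma-go-equi-description} applied to the $G$-g.o.~space $(G/H,F)$ produces a $G$-geodesic vector $w\in\mathfrak{g}$ with $\mathrm{pr}_\mathfrak{m}(w)=y$. Writing $w=w_\mathfrak{h}+y$, both summands lie in $\mathfrak{g}'$ (since $w_\mathfrak{h}\in\mathfrak{h}\subset\mathfrak{g}'$ and $y\in\mathfrak{m}'\subset\mathfrak{g}'$), so $w\in\mathfrak{g}'$ automatically. Therefore the constant-speed geodesic $c(t)=\exp(tw)\cdot o$ is contained in $G'\cdot o=G'/H$, showing that $G'/H$ is totally geodesic at~$o$. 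Translating by the isometric transitive $G'$-action extends the totally-geodesic conclusion to every point of $G'/H$.

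For the second claim, Lemma \ref{lemma-total-geo-spray} says that the geodesic spray of $(G/H,F)$ restricted to $T(G'/H)\backslash 0$ coincides with that of $(G'/H,F|_{G'/H})$. So every constant-speed geodesic on $(G'/H,F|_{G'/H})$ has the form $\exp(tw)\cdot x$ with $w\in\mathfrak{g}'$ and $x\in G'/H$, which is exactly the $G'$-homogeneity asserted. The proof is algebraically quite light; the only real observation is that whenever the $\mathfrak{m}$-projection of a $G$-geodesic vector falls in $\mathfrak{m}'$, the vector itself automatically belongs to $\mathfrak{g}'$. The closest thing to an obstacle is the initial bookkeeping---setting up the compatible reductive decomposition and checking the $\mathrm{Ad}(H)$-invariance of $\mathfrak{m}'$---after which both conclusions drop out immediately.
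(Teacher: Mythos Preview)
Your argument is correct and, while it reaches the same conclusion via Lemma~\ref{lemma-go-equi-description}, it takes a somewhat different route from the paper's own sketch. The paper reduces to the effective case, takes the $B_\mathfrak{g}$-orthogonal decomposition $\mathfrak{g}=\mathfrak{h}+\mathfrak{m}'+\mathfrak{m}''$, and works with the spray vector field $\eta$: using characterization~(3) of Lemma~\ref{lemma-go-equi-description}, it observes that $\eta(y)$ is tangent to the $\mathrm{Ad}(H)$-orbit of $y$, hence $\eta(\mathfrak{m}'\setminus\{0\})\subset\mathfrak{m}'$, and from this deduces both the tangency of the ambient geodesic spray to $T(G'/H)\setminus 0$ and the coincidence of $\eta|_{\mathfrak{m}'\setminus\{0\}}$ with the spray field of $G'/H$. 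You instead use characterization~(1) directly: given $y\in\mathfrak{m}'$, the geodesic vector $w=w_\mathfrak{h}+y$ lies in $\mathfrak{h}+\mathfrak{m}'=\mathfrak{g}'$ automatically, so $\exp(tw)\cdot o$ never leaves $G'/H$. This is more elementary---it bypasses the spray-field machinery for the totally-geodesic part and does not require the $B_\mathfrak{g}$-orthogonal decomposition (though you should still note, as the paper does, that one may reduce to the effective action to guarantee a reductive decomposition exists). The paper's route, on the other hand, makes the identification of the two spray fields explicit, which is conceptually useful elsewhere. Both arguments ultimately rest on the same trivial inclusion $\mathfrak{h}\subset\mathfrak{g}'$.
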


Lemma \ref{lemma-go-total-geod-tech} is a generalization of Proposition 1.9 in \cite{Go1996}. The connectedness assumptions
for $H$ and $G'$ in this lemma is only for the convenience in later discussion, and not necessary.

Let us briefly recall its proof, which can be easily reduced to the situation that the $G$-action on $G/H$ is effective. Then we fix a $B_\mathfrak{g}$-orthogonal
$\mathrm{Ad}(H)$-invariant linear decomposition $\mathfrak{g}=\mathfrak{h}+\mathfrak{m}'+\mathfrak{m}''$, such that
$\mathfrak{g}'=\mathrm{Lie}(G')=\mathfrak{h}+\mathfrak{m}'$.

The geodesic spray $\mathrm{G}(x,y)$ for $(G/H,F)$
can be determined by the spray vector field $\eta(\cdot)$
with respect to the reductive decomposition
$\mathfrak{g}=\mathfrak{h}+\mathfrak{m}=\mathfrak{h}+
(\mathfrak{m}'+\mathfrak{m}'')$ for $(G/H,F)$.
Since $(G/H,F)$ is $G$-g.o., by Lemma \ref{lemma-go-equi-description}, $\eta(y)$ is tangent to
$\mathrm{Ad}(H)y$ for any nonzero vector
$y\in\mathfrak{m}'$. So we have
$\eta(\mathfrak{m}'\backslash\{0\})\subset\mathfrak{m}'$.
It is easy to check that in this situation $\eta|_{\mathfrak{m}'\backslash\{0\}}$ coincides with the spray
vector field of $(G'/H,F|_{G'/H})$, with respect to the reductive
decomposition $\mathfrak{g}'=\mathfrak{h}+\mathfrak{m}'$.
Applying Lemma \ref{lemma-go-equi-description}, we see $(G'/H,F')$
is $G'$-g.o..
Meanwhile, because $\eta(\mathfrak{m}'\backslash\{0\})\subset\mathfrak{m}'$, we see that the geodesic spray $\mathrm{G}(x,y)$ of $(G/H,F)$ is tangent to $T(G'/H)\backslash 0$ when $x=o$. By homogeneity and Lemma \ref{lemma-total-geo-spray}, $(G'/H,F')$
is totally geodesic in $(G/H,F)$.

\subsection{Submersion techniques}
\label{sec-sub-tech}
In \cite{XDY-preprint}, we have proved the following submersion technique
for g.o.~Finsler space.

\begin{lemma}\label{lemma-go-submersion-1}
Let $(G/H,F)$ be a connected $G$-g.o.~Finsler space where $G$ is
a closed subgroup of $I_0(G/H,F)$. Then for any closed normal subgroup $H'$ of $G$, there exists a unique $G$-g.o.~Finsler
metric $F'$ on $G/HH'$, defined by submersion from the natural projection from $G/H$ to $G/HH'$ and the metric $F$ on $G/H$.
\end{lemma}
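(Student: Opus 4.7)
The plan is to realize the projection $\pi:G/H\to G/HH'$ as the quotient map for an isometric $H'$-action on $(G/H,F)$, apply Lemma \ref{lemma-0} to obtain the submersed metric $F'$, and then transfer the g.o.~property by horizontal lifting.

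First I would set up the underlying smooth structure. Since $H'$ is a closed normal subgroup of $G$ and $H$ is closed in $G$ (it is compactly imbedded as $G\subset I_0(G/H,F)$), the product $HH'=H'H$ is a closed subgroup of $G$, so $G/HH'$ is a smooth coset manifold and the natural map $\pi:G/H\to G/HH'$ is smooth, surjective and $G$-equivariant. Using normality of $H'$, the fiber of $\pi$ through $gH$ equals the left $H'$-orbit $H'\cdot gH$, so $G/HH'$ is canonically identified with $H'\backslash(G/H)$. Because $H'\subset G$ and $G$ acts by isometries on $(G/H,F)$, the left $H'$-action on $(G/H,F)$ is isometric, and Lemma \ref{lemma-0} produces a unique Finsler metric $F'$ on $G/HH'$ such that $\pi:(G/H,F)\to(G/HH',F')$ is a Finslerian submersion. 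This gives both existence and uniqueness of $F'$.

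Next I would verify the $G$-g.o.~property of $(G/HH',F')$. Let $c(t)$ be any constant-speed geodesic on $(G/HH',F')$ with $c(0)=x_2$ and $\dot c(0)=y_2$, pick a preimage $x_1\in\pi^{-1}(x_2)$, take the horizontal lift $y_1\in T_{x_1}(G/H)$ of $y_2$, and let $\tilde c(t)$ be the geodesic on $(G/H,F)$ with $\tilde c(0)=x_1,\ \dot{\tilde c}(0)=y_1$. As recalled in Section 2.3, $\tilde c(t)$ is horizontal throughout, so $\pi\circ\tilde c$ is a constant-speed geodesic on $(G/HH',F')$ with the same initial data, hence equals $c(t)$. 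Since $(G/H,F)$ is $G$-g.o., we may write $\tilde c(t)=\exp(tX)\cdot x_1$ for some $X\in\mathfrak{g}$; $G$-equivariance of $\pi$ then gives $c(t)=\pi(\exp(tX)\cdot x_1)=\exp(tX)\cdot x_2$, which is the required presentation as the orbit of a one-parameter subgroup of $G$.

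The two ingredients I expect to have to handle carefully are (i) the identification $G/HH'=H'\backslash(G/H)$ together with the smoothness of the projection, where normality of $H'$ is essential so that the left $H'$-action commutes through the right $H$-action to give a well-defined map, and (ii) the check that horizontal lifting produces constant-speed geodesics on $(G/H,F)$ and that this lifting is compatible with the $G$-equivariance of $\pi$; once these are in place, the g.o.~conclusion is immediate from Lemma \ref{lemma-go-equi-description} applied on $G/H$. Neither step involves substantial new computation beyond the standard facts already catalogued in Section 2.3 and Lemma \ref{lemma-0}, so I do not foresee a genuine obstacle.
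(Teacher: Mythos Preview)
Your approach is essentially identical to the paper's: identify $G/HH'$ with the orbit space $H'\backslash(G/H)$ for the isometric left $H'$-action, invoke Lemma~\ref{lemma-0} to get the submersed metric $F'$, and then transfer the g.o.\ property via horizontal lifting of geodesics (exactly the mechanism the paper spells out in the proof of the next lemma, Lemma~\ref{lemma-go-submersion-2}). One small imprecision: you justify that $H$ is closed by saying it is ``compactly imbedded as $G\subset I_0(G/H,F)$,'' but compact imbedding alone does not guarantee closedness; the hypothesis is that $G$ is a \emph{closed} subgroup of $I_0(G/H,F)$, which makes the isotropy $H$ genuinely compact, and it is this compactness that ensures $HH'$ is closed. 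With that correction (and perhaps an explicit line that $F'$ is $G$-invariant by uniqueness, so that ``$G$-g.o.'' makes sense), your argument matches the paper's.
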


The requirement for $G$ in Lemma \ref{lemma-go-submersion-1} guaranteed that $H$ is compact.
So $H'H$ is a closed subgroup of $G$, and then $G/H'H$ is a
smooth coset space.
We identify $G/HH'$ with the
orbit space $H'\backslash G/H$ for the isometries of all
left $H'$-actions on $G/H$, then Lemma \ref{lemma-0}
provides a metric $F'$ on $H'\backslash G/H=G/H'H$. Proving this $F'$ meets
the requirements in the lemma is easy, so we omit it here. See
Theorem 3.5 and its proof in \cite{XDY-preprint}.

In later discussion, we will also use the following submersion technique.

\begin{lemma}\label{lemma-go-submersion-2}
Let $(G/H,F)$ be a connected $G$-g.o.~Finsler space in which both $G$ and $H$ are connected. Suppose that $H'$ is a closed connected
subgroup of $G$ such that $H$ is normal in $H'$. Then there exists
a $G$-g.o.~Finsler metric $F'$ on $G/H'$, such that the natural projection from $(G/H,F)$ to $(G/H',F')$ is a submersion.
\end{lemma}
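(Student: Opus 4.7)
The plan is to realize $G/H'$ as the orbit space of a natural right action of $H'$ on $G/H$, verify that this action is by isometries by invoking the g.o.\ characterization in Lemma \ref{lemma-go-commuting-isometries}, obtain the metric $F'$ from Lemma \ref{lemma-0}, and then transfer the $G$-g.o.\ property across horizontal lifts.

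First I would check that, because $H$ is normal in $H'$, the prescription $R_{h'}(gH) = g h'H$ defines a smooth right action of $H'$ on $G/H$ whose kernel contains $H$. For each $Y \in \mathfrak{h}'$ the fundamental vector field $\tilde Y$ on $G/H$, whose value at $gH$ is $\tfrac{d}{dt}\big|_{t=0} g\exp(tY)H$, is smooth, and its flow (right multiplication by $\exp(tY)$) commutes with every left translation by elements of $G$. Hence $\tilde Y$ commutes with every Killing vector field coming from $\mathfrak{g}$, and Lemma \ref{lemma-go-commuting-isometries} forces $\tilde Y$ to be a Killing vector field for $F$. Since $H'$ is connected, this upgrades to the statement that the whole right $H'$-action on $(G/H,F)$ is by isometries.

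Next, because $H \subset H'$, the right $H'$-orbit of $gH$ consists of all $H$-cosets inside $gH'$, so the orbit space is canonically $G/H'$, a smooth manifold (as $H'$ is closed in $G$), and the natural projection $\pi: G/H \to G/H'$ is smooth. Lemma \ref{lemma-0} then produces a unique Finsler metric $F'$ on $G/H'$ making $\pi:(G/H,F) \to (G/H',F')$ a Finslerian submersion. The metric $F'$ is $G$-invariant because the left $G$-action on $G/H$ commutes with the right $H'$-action (so it descends to $G/H'$), preserves the vertical distribution of $\pi$, and preserves $F$; consequently horizontal lifts are $G$-equivariant and $G$-invariance of $F'$ follows from its definition via horizontal lifting. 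For the g.o.\ property, given any constant speed geodesic $c(t)$ on $(G/H',F')$, I would horizontally lift it through a chosen point to a curve $\tilde c(t)$ on $G/H$; by the observation recalled in Section 2, $\tilde c(t)$ is itself a constant speed geodesic on $(G/H,F)$, so the $G$-g.o.\ hypothesis yields $X \in \mathfrak{g}$ with $\tilde c(t) = \exp(tX)\tilde c(0)$, and projecting gives $c(t) = \exp(tX) c(0)$.

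The one step that genuinely uses the g.o.\ hypothesis, and hence the main obstacle, is promoting the right $H'$-action to an isometric action; everything else is standard submersion bookkeeping. The decisive input is Lemma \ref{lemma-go-commuting-isometries}, available exactly because in a g.o.\ Finsler space any smooth vector field commuting with all left invariant Killing vector fields is automatically Killing.
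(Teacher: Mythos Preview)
Your proposal is correct and follows essentially the same route as the paper's proof: you use Lemma \ref{lemma-go-commuting-isometries} to show the right $H'$-action is isometric, apply Lemma \ref{lemma-0} to obtain $F'$, and then pass the $G$-g.o.\ property through horizontal lifts. Your argument is in fact slightly more careful than the paper's in spelling out the passage from Killing vector fields to an isometric group action via connectedness of $H'$.
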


\begin{proof} The
left $G$-multiplications and right $H'$-multiplications on $G$
induce two connected Lie groups of diffeomorphisms on $G/H$,
i.e., left $G$-actions and right $H'$-actions, which commute with each other. Since $(G/H,F)$ is $G$-g.o., by Lemma \ref{lemma-go-commuting-isometries}, right $H'$-actions on $G/H$ are also isometries. The orbit space of right $H'$-actions on $G/H$
is the smooth coset space $G/H'$, and the quotient map coincides
with the natural projection $\pi:G/H\rightarrow G/H'$. By Lemma \ref{lemma-0}, there exists a Finsler metric $F'$ on $G/H'$ induced by submersion and the metric $F$. By the uniqueness of this $F'$, left $G$-actions on $G/H$ induce isometries on $(G/H',F')$, i.e., $F'$ is $G$-invariant.

To prove $F'$ is $G$-g.o., we consider any constant speed geodesic $c_2(t)$ on $(G/H',F')$. Denote $x'=c_2(0)$ and $x$ any point in
$\pi^{-1}(x')$, then there exists a unique horizonal lifting $c_1(t)$ of $c_2(t)$ such that $c_1(t)$ is a constant speed geodesic with $c_2(0)=x$. Since $(G/H,F)$ is $G$-g.o., we can find
a vector $u\in\mathfrak{g}$, such that $c_1(t)=\exp tu\cdot x$.
Then we also have $c_2(t)=\exp tu\cdot x'$ for all $t\in\mathbb{R}$.
So $(G/H',F')$ is $G$-g.o., which ends the proof of this lemma.
\end{proof}

\section{A special class of non-negatively curved g.o.~Finsler spaces}
\label{sec-a-special-class-non-neg}

%
%
%
%
%
%

As the preparation for proving Theorem \ref{main-thm},
we will use the totally geodesic and submersion techniques in
Section \ref{sec-total-geo} and Section \ref{sec-sub-tech}
to prove the following theorem.

\begin{theorem}\label{thm-preparation}
Assume $(G/H,F)$ is a connected $G$-g.o.~Finsler
space satisfying the following conditions:
\begin{enumerate}
\item $\mathfrak{g}=\mathrm{Lie}(G)$ is a real reductive Lie algebra, $\mathfrak{h}=\mathrm{Lie}(H)$ is compact, and $\mathrm{rk}\mathfrak{g}\leq\mathrm{rk}\mathfrak{h}+1$;
\item $(G/H,F)$ has non-negative flag curvature.
\end{enumerate}
Then $\mathfrak{g}$ is a compact Lie algebra.
\end{theorem}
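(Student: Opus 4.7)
The plan is to argue by contradiction: assume $\mathfrak{g}_{nc}\neq 0$ and exhibit a totally geodesic Finsler submanifold of $(G/H,F)$ carrying a strictly negatively curved flag, contradicting the non-negative flag curvature hypothesis via Lemma~\ref{lemma-flag-curvature-totally-geodesic}.

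\textbf{Step 1 (Absorb the compact ideals).} Since $\mathfrak{g}_{nc}$ is an ideal of $\mathfrak{g}$ and $\mathfrak{h}$ is a subalgebra, $\mathfrak{g}':=\mathfrak{g}_{nc}+\mathfrak{h}$ is a Lie subalgebra of $\mathfrak{g}$ containing $\mathfrak{h}$. By Lemma~\ref{lemma-go-total-geod-tech}, the corresponding $G'/H$ is a totally geodesic, $G'$-g.o.~Finsler submanifold of $(G/H,F)$; Lemma~\ref{lemma-flag-curvature-totally-geodesic} transfers the non-negative flag curvature. A Cartan subalgebra count, enlarging a Cartan of $\mathfrak{g}'$ by commuting elements from $(\mathfrak{c}(\mathfrak{g})\oplus\mathfrak{g}_c)\setminus\mathfrak{g}'$ to build a Cartan of $\mathfrak{g}$, yields $\mathrm{rk}\,\mathfrak{g}'\leq\mathrm{rk}\,\mathfrak{h}+1$; real reductivity and compactness of $\mathfrak{h}$ are preserved, with $\mathfrak{g}_{nc}$ remaining the non-compact semisimple ideal. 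Replacing $G$ by $G'$ we may assume $\mathfrak{g}=\mathfrak{g}_{nc}+\mathfrak{h}$.

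\textbf{Step 2 (Isolate a symmetric non-compact piece).} Fix a Cartan decomposition $\mathfrak{g}_{nc}=\mathfrak{k}_{nc}+\mathfrak{p}_{nc}$ arranged so that the compact subalgebra $\mathfrak{h}\cap\mathfrak{g}_{nc}$ lies in $\mathfrak{k}_{nc}$, and an $\mathrm{Ad}(H)$-invariant reductive decomposition $\mathfrak{g}=\mathfrak{h}+\mathfrak{m}$. Since $\mathfrak{g}_{nc}\neq 0$ we have $\mathfrak{p}_{nc}\neq 0$. The tight rank inequality combined with the g.o.~criterion (Lemma~\ref{lemma-go-equi-description}) and the fixed-point technique (Lemma~\ref{lemma-go-fix-point-tech}) applied to well-chosen closed subgroups $L\subset H$—augmented where required by the submersion techniques of Lemmas~\ref{lemma-go-submersion-1} and~\ref{lemma-go-submersion-2}—allows one to cut out a closed totally geodesic sub-Finsler submanifold $(M'',F|_{M''})$ admitting a Cartan-type reductive decomposition $\mathrm{Lie}(G'')=\mathrm{Lie}(H'')+\mathfrak{m}''$ with $[\mathfrak{m}'',\mathfrak{m}'']\subset\mathrm{Lie}(H'')$ and $\mathfrak{m}''$ of non-compact type. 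This is where the paper's advertised case-by-case discussion enters, handling each admissible configuration of simple factors of $\mathfrak{g}_{nc}$ compatible with the rank bound.

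\textbf{Step 3 (Symmetric space formula and contradiction).} On $(M'',F|_{M''})$, Lemma~\ref{prop-symmetric-space-non-compact} gives the Riemannian-style flag curvature expression
\[
K^{F|_{M''}}(o,u,\mathbf{P})=\frac{\langle [[v,u],v],u\rangle^F_u}{\langle u,u\rangle^F_u\langle v,v\rangle^F_u-\langle u,v\rangle^F_u\langle u,v\rangle^F_u}
\]
for any linearly independent pair $u,v\in\mathfrak{m}''$. By the remark at the end of Section~2 (a globally symmetric Finsler space of non-compact type has non-positive flag curvature and negative Ricci scalar, hence some strictly negative flag curvature), a flag on $M''$ satisfies $K^{F|_{M''}}<0$. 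By Lemma~\ref{lemma-flag-curvature-totally-geodesic} this is also a flag of $(G/H,F)$ with negative curvature, contradicting the non-negative flag curvature hypothesis. Therefore $\mathfrak{g}_{nc}=0$, i.e., $\mathfrak{g}$ is compact.

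\textbf{Main obstacle.} The heart of the argument is Step~2: the subalgebra $\mathfrak{h}$ is not assumed to contain a maximal compact $\mathfrak{k}_{nc}$ of $\mathfrak{g}_{nc}$, so the non-compact symmetric space $G_{nc}/K_{nc}$ is not \emph{a priori} realised as a totally geodesic submanifold of $G/H$, and producing a symmetric-type totally geodesic submanifold requires using the g.o.~equation to force enough rigidity on the spray vector field. This is presumably where the case-by-case treatment promised in the introduction of Section~\ref{sec-a-special-class-non-neg} does its work.
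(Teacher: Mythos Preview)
Your Step~2 is not a proof but a description of the desired outcome: you assert that the g.o.\ criterion, fixed-point, and submersion techniques together ``allow one to cut out'' a totally geodesic submanifold with a Cartan-type reductive decomposition of non-compact type, and you even acknowledge in the final paragraph that this is where the real work lies and defer to the paper's promised case-by-case treatment. That is the gap. There is also an internal inconsistency: you invoke the submersion lemmas in Step~2, but submersion produces a quotient, not a totally geodesic submanifold, so Lemma~\ref{lemma-flag-curvature-totally-geodesic} (which you rely on in Step~3) does not apply to it; you would need Lemma~\ref{coro-flag-curv-submersion} instead, and then the negative-curvature argument must be carried out on the submersion image, not lifted back.

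The paper's argument is organised quite differently. It proceeds by induction on $\mathrm{rk}\,\mathfrak{h}$. The base case $\mathrm{rk}\,\mathfrak{h}=0$ is Lemma~\ref{lemma-left-invariant-go}. For $\mathrm{rk}\,\mathfrak{h}=1$ the rank inequality forces $\mathrm{rk}\,\mathfrak{g}\leq 2$, and the non-compact possibilities are classified into eight explicit cases; each is eliminated, but not always by reducing to a non-compact symmetric space as you propose: several cases (e.g.\ $\mathfrak{g}=sl(2,\mathbb{R})\oplus su(2)$ with $\mathfrak{h}=su(2)$, or the $sp(2)$ subcase) are killed instead by producing a left-invariant $SL(2,\mathbb{R})$-g.o.\ metric and invoking Lemma~\ref{lemma-left-invariant-go}, and others (e.g.\ Case~4) require first passing to a totally geodesic subspace and \emph{then} submerging to a hyperbolic plane. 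The inductive step for $\mathrm{rk}\,\mathfrak{h}=n>1$ is clean: choose a root $\alpha$ of $\mathfrak{g}^u$ whose root plane is not in $\mathfrak{k}$, pick a one-dimensional subtorus $T'\subset T_H$ in $\ker\alpha$, and apply Lemma~\ref{lemma-go-fix-point-tech} to obtain $\mathrm{Fix}_o(T',G/H)=G'/H'$ with $\mathrm{rk}\,\mathfrak{h}'=n-1$, still real reductive, still satisfying the rank inequality, still non-negatively curved, but with $\mathfrak{g}'$ non-compact because $\alpha$ survives; this contradicts the inductive hypothesis. Your Step~1 reduction to $\mathfrak{g}=\mathfrak{g}_{nc}+\mathfrak{h}$ is not used.
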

\begin{proof}
For simplicity,
we may assume $G$ and $H$ are connected, i.e.,
replace $G$ and $H$ with their identity components $G_0$ and $H_0$ respectively, because $F$ induces
a locally isometric $G$-g.o.~Finsler metric on $G_0/H_0$, meeting all requirements in the theorem.

We denote
$\mathfrak{g}=\mathfrak{c}(\mathfrak{g})
\oplus\mathfrak{g}_c\oplus
\mathfrak{g}_{nc}$ the decomposition for the real reductive $\mathfrak{g}$.
We can find a maximal compactly imbedded subalgebra $\mathfrak{k}=\mathfrak{c}(\mathfrak{g})\oplus\mathfrak{g}_c
\oplus\mathfrak{k}_{nc}$ such that $\mathfrak{h}\subset\mathfrak{k}$. Denote
$\mathfrak{g}=\mathfrak{k}+\mathfrak{p}$ and
$\mathfrak{g}^u=\mathfrak{k}+\sqrt{-1}\mathfrak{p}$
the Cartan decompositions for $\mathfrak{g}$ and $\mathfrak{g}^u$
respectively, where $\mathfrak{p}$ is
the $B_\mathfrak{g}$-orthogonal complement of $\mathfrak{k}_{nc}$ in $\mathfrak{g}_{nc}$.
Let $\mathfrak{m}_1$ be the $B_\mathfrak{g}$-orthogonal complement of $\mathfrak{h}$ in $\mathfrak{k}$, then we have a $B_\mathfrak{g}$-orthogonal $\mathrm{Ad}(H)$-invariant linear decomposition
$$\mathfrak{g}=\mathfrak{h}+\mathfrak{m}_1+\mathfrak{p}.$$

We choose a Cartan subalgebra
$\mathfrak{t}_\mathfrak{h}$ of $\mathfrak{h}$, extend it
to a Cartan subalgebra $\mathfrak{t}_\mathfrak{k}$ of $\mathfrak{k}$, and extend it again to a Cartan subalgebra $\mathfrak{t}$ of
$\mathfrak{g}$. This $\mathfrak{t}$ satisfies
$$\mathfrak{t}=
\mathfrak{t}\cap\mathfrak{h}+\mathfrak{t}
\cap\mathfrak{m}_1+\mathfrak{t}\cap\mathfrak{p}.$$
Correspondingly, we have the Cartan subalgebra $\mathfrak{t}^u=\mathfrak{t}\cap\mathfrak{k}+
\sqrt{-1}\mathfrak{t}\cap\mathfrak{p}$
of $\mathfrak{g}^u$, with respect to which, we have the root plane
decomposition
$$\mathfrak{g}^u=\mathfrak{t}^u+\sum_{\alpha\in\Delta}
\mathfrak{g}^u_{\pm\alpha},$$
where $\Delta\subset(\mathfrak{t}^u)^*$ is the root system and $\mathfrak{g}^u_{\pm\alpha}$ are root planes.

Now we prove Theorem \ref{thm-preparation} by an induction for $\mathrm{rk}\mathfrak{h}$.

Firstly, we prove the lemma when $\mathrm{rk}\mathfrak{h}\leq 1$. When $\mathrm{rk}\mathfrak{h}=0$, i.e., $H=\{e\}$,
$F$ is a $G$-g.o.~left invariant Finsler metric on $G$. By Lemma
\ref{lemma-left-invariant-go},
$\mathfrak{g}$ must be compact.

When $\mathrm{rk}\mathfrak{h}=1$,
by the rank inequality in (1) of Theorem \ref{thm-preparation},
we have
$\mathrm{rk}\mathfrak{g}\leq2$. All the
possible non-compact $\mathfrak{g}$'s can be sorted as following.

{\bf Case 1:} $\mathrm{rk}{\mathfrak{h}}=\mathrm{rk}\mathfrak{k}=
\mathrm{rk}\mathfrak{g}=1$.

In this case, $(G/H,F)$ is a hyperbolic plane, i.e., $G/H=SL(2,\mathbb{R})/SO(2)$ and $F$ is a Riemannian metric with negative constant curvature. This is a contradiction to (2) of
Theorem \ref{thm-preparation}.

{\bf Case 2:} $\mathrm{rk}\mathfrak{k}
<\mathrm{rk}\mathfrak{g}=2$ and $\mathfrak{h}=\mathfrak{k}$.

In this case, $(G/H,F)$ is a globally symmetric Finsler space of non-compact type. It has negative Ricci scalar, contradicting
(2) of Theorem \ref{thm-preparation}.

{\bf Case 3:} $\mathrm{rk}\mathfrak{k}<
\mathrm{rk}\mathfrak{g}$ and $\mathfrak{h}\neq\mathfrak{k}$.

In this case, there are two possibilities, i.e., either
$$\mathfrak{g}=sl(3,\mathbb{R}),  \mathfrak{k}=so(3)  \mbox{ and } \mathfrak{h}=so(2),$$
where $\mathfrak{h}$ corresponds to a $2\times2$-block, or
$$\mathfrak{g}=sl(2,\mathbb{C}), \mathfrak{k}=su(2)\mbox{ and }
\mathfrak{h}=so(2).$$
For both possibilities,
there exists a subalgebra $\mathfrak{g}'=sl(2,\mathbb{R})$ of $\mathfrak{g}$ which contains $\mathfrak{h}$.
Denote $G'$ the connected subgroup of $G$ generated by $\mathfrak{g}'$. By Lemma \ref{lemma-flag-curvature-totally-geodesic}
and Lemma \ref{lemma-go-total-geod-tech}, $(G'/H,F|_{G'/H})$
is totally geodesic in $(G/H,F)$, and non-negatively curved itself. On the other hand, $(G'/H,F|_{G'/H})$ is the
hyperbolic plane with negative constant curvature. This is
a contradiction.

{\bf Case 4:}
$\mathrm{rk}\mathfrak{k}
=\mathrm{rk}\mathfrak{g}=2$, $\mathfrak{g}$ is not simple and $\mathfrak{h}=\mathbb{R}$.

In this case, the non-compact Lie algebra $\mathfrak{g}$ must have an ideal $\mathfrak{g}_1=sl(2,\mathbb{R})$.
Then the subalgebra $\mathfrak{g}'=\mathfrak{h}+\mathfrak{g}_1$
 is either
$sl(2,\mathbb{R})\oplus\mathbb{R}$ or $sl(2,\mathbb{R})$. Denote $G'$ the
connected subgroup generated by $\mathfrak{g}'$.
By Lemma \ref{lemma-flag-curvature-totally-geodesic} and Lemma \ref{lemma-go-total-geod-tech},
$(G'/H,F|_{G'/H})$ is a totally geodesic submanifold in $(G/H,F)$,
and a non-negatively curve $G'$-g.o.~Finsler space itself.
Let $\mathfrak{t}'$ be a Cartan subalgebra of $\mathfrak{g}'$ containing $\mathfrak{h}$ and $T'$ the corresponding connected subgroup generated by $\mathfrak{t}'$. By
Lemma \ref{coro-flag-curv-submersion}
 and
Lemma \ref{lemma-go-submersion-2},
there exists a non-negatively curved homogeneous Finsler metric $F'$ on $G'/T'$ defined by submersion. On the other hand, $(G'/T',F')$ is the hyperbolic plane with negative constant curvature. This is a contradiction.

{\bf Case 5:} $\mathrm{rk}\mathfrak{k}
=\mathrm{rk}\mathfrak{g}=2$, $\mathfrak{g}$ is not simple and
$\mathfrak{h}=su(2)$.

In this case, $\mathfrak{g}=sl(2,\mathbb{R})\oplus su(2)$ and
$\mathfrak{h}$ is the $su(2)$-factor in $\mathfrak{g}$.
Then $F$ is locally isometric to
a left invariant Finsler metric on the Lie group $SL(2,\mathbb{R})$, which is g.o.~with respect to all left multiplications.
This is a contradiction to Lemma \ref{lemma-left-invariant-go}  because $sl(2,\mathbb{R})$ is not compact.

{\bf Case 6:} $\mathrm{rk}\mathfrak{k}
=\mathrm{rk}\mathfrak{g}=2$, $\mathfrak{g}$ is simple, and $\mathfrak{h}=\mathbb{R}$.

In this case, we can find a subalgebra $\mathfrak{g}'=\mathfrak{t}+\sqrt{-1}\mathfrak{g}^u_{\pm\alpha}
=sl(2,\mathbb{R})\oplus\mathbb{R}\subset\mathfrak{g}$,
in which $\mathfrak{g}^u_{\pm\alpha}$ is a root plane of $\mathfrak{g}^u$ contained in
$\sqrt{-1}\mathfrak{p}$.
Let $G'$ be connected subgroup of $G$ generated by $\mathfrak{g}'$, then $G'$ contains $H$, and by
Lemma \ref{lemma-flag-curvature-totally-geodesic} and
Lemma \ref{lemma-go-total-geod-tech}, $(G'/H,F|_{G'/H})$ is a
totally geodesic submanifold in $(G/H,F)$ and a non-negatively curved $G'$-g.o.~Finsler space itself.
We have discussed it in Case 4 and find a contradiction.

{\bf Case 7:} $\mathrm{rk}\mathfrak{k}
=\mathrm{rk}\mathfrak{g}=2$, $\mathfrak{g}$ is simple, and $\mathfrak{h}=su(2)$ is an ideal of $\mathfrak{k}$.

By Lemma \ref{coro-flag-curv-submersion}
and
Lemma \ref{lemma-go-submersion-2}, there exists a non-negatively curved
homogeneous Finsler metric $F'$ on $G/K$
defined by submersion. But $(G/K,F')$
is a globally symmetric Finsler space of non-compact type. This is a contradiction because $(G/K,F')$ has negative Ricci scalar.

{\bf Case 8:} $\mathrm{rk}\mathfrak{k}
=\mathrm{rk}\mathfrak{g}=2$, $\mathfrak{g}$ is simple and $\mathfrak{h}=su(2)$ is not an ideal of $\mathfrak{k}$.

In this final case, only two possibilities remains.

{\bf Subcase 8.1} $\mathfrak{g}=sp(2)$, $\mathfrak{k}=sp(1)\oplus sp(1)$ and $\mathfrak{h}= sp(1)$ diagonally imbedded in $\mathfrak{k}$.

Let $T_H$ be the maximal torus in $H$ generated by $\mathfrak{t}_\mathfrak{h}=\mathfrak{t}\cap\mathfrak{h}$.
The restriction of $F$ to the fixed point set $\mathrm{Fix}_o(T_H,G/H)$
is locally isometric to a left invariant Finsler
metric on the Lie group $SL(2,\mathbb{R})$, which is g.o.~with
respect to all left multiplications. This is a contradiction to Lemma \ref{lemma-left-invariant-go} because $sl(2,\mathbb{R})$ is not compact.

{\bf Subcase 8.2:} $\mathfrak{g}=G_2$, $\mathfrak{k}=su(2)\oplus su(2)$ and $\mathfrak{h}=su(2)$ diagonally imbedded in $\mathfrak{k}$.

We can find a long root $\alpha$ and a short root $\beta$ for $\mathfrak{k}$, such that
$\mathfrak{t}_\mathfrak{h}=\mathrm{ker}(\alpha-\beta)$.
Notice that $\frac12\alpha+\frac32\beta$ and $\frac12\alpha-\frac12\beta$ are an orthogonal pair of roots in $\Delta$.
We can find an element $g\in T_H=\exp\mathfrak{t}_\mathfrak{h}$, such that $\mathrm{Ad}(g)$ coincides with the identity map $\mathrm{Id}$ when restricted to
$\mathfrak{g}^u_{\pm(\frac12\alpha+\frac32\beta))}$ and
$\mathfrak{g}^u_{\pm(\frac12\alpha-\frac12\beta)}$, and
and $-\mathrm{Id}$ when restricted to any other root plane in
$\mathfrak{g}^u$.
By Lemma \ref{lemma-go-fix-point-tech}, the fixed point set $(\mathrm{Fix}_o(g,G/H),F|_{\mathrm{Fix}_o(g,G/H)})$
is a non-negatively curved
totally geodesic submanifold in $(G/H,F)$, and it is
locally isometric to an $SL(2,\mathbb{R})\times SL(2,\mathbb{R})$-g.o.~metric on $(SL(2,\mathbb{R})\times SL(2,\mathbb{R}))/(SO(2)\times\{I\})$. This is a situation which
has been discussed in Case 4 where we have found a contradiction.

To summarize, we have proved by contradiction that $\mathfrak{g}$ must be compact, i.e., the theorem is valid, when $\mathrm{rk}\mathfrak{h}=1$.

Nextly, we assume that the theorem is valid when $\mathrm{rk}\mathfrak{h}=n-1$ with $n>1$
and then prove the case
$\mathrm{rk}\mathfrak{h}=n$ by contradiction.

We assume conversely that $\mathfrak{g}$ is not compact
when $\mathrm{rk}\mathfrak{h}=n$.
Let $\alpha\in\Delta\subset(\mathfrak{t}^u)^*$ be a root of $\mathfrak{g}^u$ such that
the root plane $\mathfrak{g}^u_{\pm\alpha}$ is not
contained in $\mathfrak{k}$. The codimension of
$\ker \alpha \cap \mathfrak{t}_\mathfrak{h}$ in $\mathfrak{t}$ is $1$ or $2$, when
$\mathrm{rk}\mathfrak{g}=n$ or $n+1$ respectively. So $\ker\alpha\cap\mathfrak{t}_\mathfrak{h}$ always has a positive dimension.

Let $T'$ be any one-dimension sub-torus in $T$ generated by a line
$\mathfrak{t}'\subset\ker\alpha\cap\mathfrak{t}_\mathfrak{h}$,
and $G'=(C_G(T'))_0/T'$ the identity component of $C_G(T')/T'$. The Lie algebra
$\mathfrak{g}'=\mathrm{Lie}(G')$ is a real reductive Lie algebra
with $\mathrm{rk}\mathfrak{g}'=\mathrm{rk}\mathfrak{g}-1$.

By Lemma \ref{lemma-flag-curvature-totally-geodesic} and Lemma \ref{lemma-go-fix-point-tech},
the fixed point set $(\mathrm{Fix}_o(T',G/H),F|_{\mathrm{Fix}_o(T',G/H)})$ is a totally geodesic submanifold in $(G/H,F)$, and a non-negatively curved $G'$-g.o.
Finsler space itself. If we present $\mathrm{Fix}_o(T',G/H)$ as
$G'/H'$, $H'=((C_G(T'))_0\cap H)/T'$ is compact because $H$ is
compact and $C_G(T')$ is closed in $G$.
The rank of $\mathfrak{h}'=\mathrm{Lie}(H')=\mathfrak{c}_{\mathfrak{h}(
\mathfrak{t}')}/\mathfrak{t}'$ is $n-1$. So the rank inequality
$\mathrm{rk}\mathfrak{g}'\leq\mathrm{rk}\mathfrak{h}'+1$ is satisfied.
The inductive assumption claim $\mathfrak{g}'$ is compact.
But this is not true because of
the root plane
$\mathfrak{g}^u_{\pm\alpha}$.

To summarize, we see the theorem is valid when $\mathrm{rk}\mathfrak{h}=n$, which ends the proof by induction.
\end{proof}

At the end of this section, we remark that Theorem \ref{thm-preparation} answers the following question positively
in a special case.

\begin{question}\label{Question-1}
Let $(M,F)$ be a connected simply connected g.o.~Finsler manifold
which has non-negative flag curvature. Then can we find
a quasi-compact Lie group acting isometrically and transitively on
$(M,F)$?
\end{question}

More generally, we may ask

\begin{question}\label{Question-2}
Let $(M,F)$ be a connected simply connected homogeneous Finsler manifold which has non-negative flag curvature. Then can we find
a quasi-compact Lie group acting isometrically and transitively on
$(M,F)$?
\end{question}

Until now, these questions are still open except in some special cases. For example, we can answer Question \ref{Question-1} positively when $(M,F)$ is normal homogeneous or $\delta$-homogeneous, by Theorem 3.6 in
\cite{XZ2018} or the Finsler version of Corollary 4.6 in \cite{XN2019}.
We can answer Question \ref{Question-2} positively when $(M,F)$
is Riemannian by the splitting theorem, or when $(M,F)$ is positively curved by Bonnet-Myers theorem.
\section{Proof of Theorem \ref{main-thm}}

In this section, we consider a homogeneous Finsler space $(G/H,F)$
with $\dim G/H>1$ satisfying the following requirements:
\begin{enumerate}
\item $G$ is a closed connected subgroup of $I_0(G/H,F)$;
\item $(G/H,F)$ is $G$-g.o.;
\item $(G/H,F)$ has non-negatively flag curvature and satisfies
the (FP) conditions.
\end{enumerate}

The condition (1) is equivalent to the following,
$$G \mbox{ acts effectively on } G/H\mbox{ and the isotropy subgroup }H\mbox{ is compact.}$$
So if we change $H$ with
its identity component $H_0$ and $F$ with the locally isometric $\tilde{F}$ that $F$ induces on $G/H_0$, (1) is still valid.
The conditions (2) and (3) are not affected by this change because they are only relevant to the local geometry and the Lie algebra. To summarize, for simplicity of later discussion,
we may further assume $H$ is connected.

We fix the $B_\mathfrak{g}$-orthogonal reductive decomposition $\mathfrak{g}=\mathfrak{h}+\mathfrak{m}$. Then the nilradical
$\mathfrak{n}=\mathfrak{nil}(\mathfrak{g})$ is contained in
$\mathfrak{m}$.

\subsection{Real reductiveness of $\mathfrak{g}$}

\begin{lemma}\label{step-1}
Keep all assumptions and notations of this section. Then $\mathfrak{g}$ is a real reductive Lie algebra with
$\dim\mathfrak{c}(\mathfrak{g})\leq1$.
\end{lemma}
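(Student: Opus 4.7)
The plan is to establish the two conclusions of Lemma \ref{step-1} via a single ``flat $2$-plane'' obstruction: any $2$-dimensional abelian subspace $\mathbf{P}$ of $\mathfrak{m}$ whose elements are simultaneously $G$-geodesic vectors generates a commuting family of isometric flows, whose orbit at $o$ is a totally geodesic flat Minkowski $2$-plane in $(G/H,F)$. By Lemma \ref{lemma-flag-curvature-totally-geodesic} all its flag curvatures vanish, contradicting (FP).

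For the center bound, effectiveness of the $G$-action forces $\mathfrak{c}(\mathfrak{g})\cap\mathfrak{h}=0$, so $\mathfrak{c}(\mathfrak{g})\subset\mathfrak{m}$. Each $u\in\mathfrak{c}(\mathfrak{g})$ satisfies $\eta(u)=0$ by (\ref{0001}), hence is a $G$-geodesic vector by Lemma \ref{lemma-go-equi-description}; since elements of $\mathfrak{c}(\mathfrak{g})$ commute, any $2$-dimensional $\mathbf{P}\subset\mathfrak{c}(\mathfrak{g})$ consists entirely of geodesic vectors, and the orbit $\exp(\mathbf{P})\cdot o$ is a totally geodesic submanifold with translation-invariant induced metric---i.e., a flat Minkowski $2$-plane. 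This violates (FP), so $\dim\mathfrak{c}(\mathfrak{g})\leq 1$.

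For reductiveness, using $[\mathfrak{rad}(\mathfrak{g}),\mathfrak{g}]\subset\mathfrak{n}:=\mathfrak{nil}(\mathfrak{g})$ and $\mathfrak{c}(\mathfrak{g})\subset\mathfrak{n}$, it suffices to prove $\mathfrak{n}=\mathfrak{c}(\mathfrak{g})$. Assume not; then the center $\mathfrak{z}(\mathfrak{n})$ of the nilradical, a nonzero abelian characteristic ideal of $\mathfrak{n}$ and hence of $\mathfrak{g}$, sits inside $\mathfrak{m}$. After inductively quotienting out any central abelian normal subgroup via Lemma \ref{lemma-go-submersion-1}---which preserves non-negative curvature by Lemma \ref{coro-flag-curv-submersion} and strictly decreases $\dim\mathfrak{n}$---one may reduce to the case $\mathfrak{z}(\mathfrak{n})\supsetneq\mathfrak{c}(\mathfrak{g})$. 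Pick $u\in\mathfrak{z}(\mathfrak{n})\setminus\mathfrak{c}(\mathfrak{g})$. The identities $[\mathfrak{z}(\mathfrak{n}),\mathfrak{g}]\subset\mathfrak{z}(\mathfrak{n})\subset\mathfrak{m}$ and $[\mathfrak{z}(\mathfrak{n}),\mathfrak{z}(\mathfrak{n})]=0$ simplify the g.o.~equation of Lemma \ref{lemma-go-equi-description} at $u$; combining this with averaging over $\mathrm{Ad}(H)$ and the fixed-point-set restriction of Lemma \ref{lemma-go-fix-point-tech} to a maximal torus of $H$, one should be able to exhibit a $2$-dimensional abelian subspace of $\mathfrak{z}(\mathfrak{n})$ whose elements are $G$-geodesic vectors, reducing to the center case and yielding the (FP)-contradiction.

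The main obstacle is this last step: producing from the non-central abelian ideal $\mathfrak{z}(\mathfrak{n})$ a concrete flat totally geodesic $2$-plane. Unlike for central vectors, a non-central $u\in\mathfrak{z}(\mathfrak{n})$ is not automatically a geodesic vector (it requires a non-trivial isotropy correction $v\in\mathfrak{h}$ in Lemma \ref{lemma-go-equi-description}), and the induced metric on the orbit $\exp(\mathfrak{z}(\mathfrak{n}))\cdot o$ is not a priori translation-invariant. The descent of (FP) under the central submersion used in the inductive reduction also requires justification. I anticipate these technicalities are resolvable by the combined application of the fixed-point-set and submersion techniques from Sections \ref{sec-total-geo} and \ref{sec-sub-tech}, mirroring their coordinated usage in the case-by-case analysis of Theorem \ref{thm-preparation}.
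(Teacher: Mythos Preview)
Your argument for $\dim\mathfrak{c}(\mathfrak{g})\leq 1$ is correct and self-contained: central vectors lie in $\mathfrak{m}$, have vanishing spray vector, and any two of them span a flat totally geodesic orbit, contradicting (FP). The paper obtains this bound differently, as a byproduct of bounding the whole radical, but your direct argument is fine.

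The reductiveness part, however, has a genuine gap that is not a technicality. Your inductive reduction quotients out central normal subgroups via Lemma~\ref{lemma-go-submersion-1}. Submersion preserves the non-negatively curved condition (Lemma~\ref{coro-flag-curv-submersion}), but there is no reason for it to preserve (FP): Theorem~\ref{theorem-flag-curvature-inequality-Finsler-submersion} only gives a one-sided inequality, and a flag downstairs whose curvatures are all zero can lift to a flag upstairs with some positive pole. So after your quotient you may no longer have (FP), and then the flat-$2$-plane obstruction is no longer a contradiction. You flag this yourself, but it is not a detail to be patched---it undermines the whole induction. The second obstacle you name (producing an abelian $2$-plane of \emph{geodesic} vectors inside $\mathfrak{z}(\mathfrak{n})$ when the vectors are non-central) is also real: for non-central $u$ the g.o.\ correction $v\in\mathfrak{h}$ is nonzero and varies with $u$, so the orbit $\exp(\mathfrak{z}(\mathfrak{n}))\cdot o$ need not be totally geodesic.

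The paper avoids both issues by using the \emph{totally geodesic} technique rather than submersion. It passes to the subgroup $G'=NH$ with $N=\mathrm{Nil}(G)$; by Lemma~\ref{lemma-go-total-geod-tech}, $G'/H$ is totally geodesic in $G/H$, so by Lemma~\ref{lemma-flag-curvature-totally-geodesic} it inherits \emph{both} non-negative curvature and (FP). This submanifold is locally isometric to $N$ with a left-invariant metric. If $N$ is non-abelian, Huang's theorem (Theorem~1.1 of \cite{Huang2015-2}) forces negative flag curvature somewhere, contradicting $K\geq 0$; if $N$ is abelian of dimension $\geq 2$, it is flat, contradicting (FP). Hence $\dim\mathfrak{n}\leq 1$, and the rest is elementary Lie theory. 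The key external input you are missing is Huang's negative-curvature result for nilpotent groups; without it, the non-abelian nilradical case cannot be handled by a flat-plane argument alone.
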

\begin{proof}
Denote $\mathfrak{r}$ and $\mathfrak{n}$ the radical and nilradical of $\mathfrak{g}$ respectively. Let $\mathfrak{s}
=\mathfrak{s}_c\oplus\mathfrak{s}_{nc}$ be a Levi subalgebra
of $\mathfrak{g}$, in which $\mathfrak{s}_c$ and $\mathfrak{s}_{nc}$ are the compact and non-compact parts of
$\mathfrak{s}$ respectively.

Firstly, we prove

{\bf Claim 1:}
$\dim\mathfrak{n}\leq 1$ and $[\mathfrak{s},\mathfrak{n}]=0$.

We assume conversely that $\dim\mathfrak{n}>1$.
Let $N=\mathrm{Nil}(G)$ be the connected normal subgroup of $G$ generated by $\mathfrak{n}$. Then $G'=NH$ is connected subgroup of $G$ containing $H$. By Lemma \ref{lemma-flag-curvature-totally-geodesic} and Lemma \ref{lemma-go-total-geod-tech}, $(G'/H,F|_{G'/H})$ is a
flagwise positively and non-negatively curved
totally geodesic submanifold in $(G/H,F)$. On the other hand,
$(G'/H,F_{G'/H})$ is locally isometric to the nilpotent Lie group $N$ endowed with a left invariant Finsler metric $F''$. If $N$ is not Abelian, the flag curvature of $(N,F'')$ is negative somewhere (see Theorem 1.1 in \cite{Huang2015-2}), which contradicts the non-negatively curved condition. If $N$ is Abelian, $(N,F'')$ is flat, which contradicts the (FP) condition.
To summarize, this argument proves $\dim\mathfrak{n}\leq1$.

We continue to prove $[\mathfrak{s},\mathfrak{n}]=0$.

If $\dim\mathfrak{n}=0$, there is nothing to prove. Otherwise,
we consider
the restriction of $\mathrm{ad}(\mathfrak{s})$-actions from $\mathfrak{n}$ to $\mathfrak{n}$. It
defines Lie algebra endomorphism from $\mathfrak{s}$ to
$\mathbb{R}$, which must vanish. So we get $[\mathfrak{s},\mathfrak{n}]=0$.

This ends the proof of
Claim 1.

Secondly, we prove

{\bf Claim 2:} $\dim\mathfrak{r}\leq 2$ and $[\mathfrak{s},\mathfrak{r}]=0$.

Assume conversely that $\dim\mathfrak{r}>2$. It implies $\mathfrak{n}\neq0$, so by Claim 1, $\dim\mathfrak{n}=1$.
The restriction of $\mathrm{ad}(\mathfrak{r})$ from $\mathfrak{n}$
to $\mathfrak{n}$ are real scalar changes. So
there exists
a nonzero vector $u\in\mathfrak{r}\backslash\mathfrak{n}$, such
that $[u,\mathfrak{n}]=0$. Since $[\mathfrak{g},\mathfrak{r}]
\subset\mathfrak{n}$, $\mathfrak{n}'=\mathbb{R}u+\mathfrak{n}$
is a nilpotent ideal of $\mathfrak{g}$ bigger than $\mathfrak{n}$.
This is a contradiction.
To summarize, we have proved $\dim\mathfrak{r}\leq2$.

We continue to prove $[\mathfrak{s},\mathfrak{r}]=0$.
This has already been proved when $\mathfrak{r}=\mathfrak{n}$.
So we only need to consider the case that
$\dim\mathfrak{r}=2$ and $\dim\mathfrak{n}=1$.
Since $[\mathfrak{s},\mathfrak{r}]\subset\mathfrak{n}$ and
$[\mathfrak{s},\mathfrak{n}]=0$,
the restriction of $\mathrm{ad}(\mathfrak{s})$-actions from
$\mathfrak{r}$ to $\mathfrak{r}$ defines a Lie algebra endomorphism from the semi-simple Lie algebra $\mathfrak{s}$
to $\mathbb{R}$, which must vanish. So we get $[\mathfrak{s},\mathfrak{r}]=0$, and end the proof of Claim 2.

Finally, we prove

{\bf Claim 3:} $\dim\mathfrak{r}\leq 1$.

Assume conversely that $\dim\mathfrak{r}=2$, then we also have
$\dim\mathfrak{n}=1$. Since $H$ is compact and $\mathrm{Ad}(H)$
preserves the one-dimensional $\mathfrak{n}$, we must have
$[\mathfrak{h},\mathfrak{n}]=0$, and then $\mathfrak{h}\subset\mathfrak{s}+\mathfrak{n}$. By Claim 2,
we have the Lie algebra direct sum
$\mathfrak{g}=\mathfrak{s}\oplus\mathfrak{r}=\mathfrak{s}_c
\oplus\mathfrak{s}_{nc}\oplus\mathfrak{r}.$
It is obvious to see that $B_\mathfrak{g}(\mathfrak{h},\mathfrak{r})=0$, i.e.
$\mathfrak{r}\subset\mathfrak{m}$ has a zero intersection with
$\mathfrak{h}$.

Let $R=\mathrm{Rad}(G)$ be the closed normal solvable subgroup of $G$ generated by $\mathfrak{r}$. Then $G'=RH$ is a connected Lie
group containing $H$. By Lemma \ref{lemma-go-total-geod-tech},
$(G'/H,F|_{G'/H})$ is a $G'$-g.o.~Finsler space. Since $R$ and $H$
commutes, $H$ is normal in $G'$, $(G'/H,F|_{G'/H})$ is $G'/H$-g.o.~ as well, i.e., it is locally isometric to the Lie group $R$ endowed
with a left invariant $R$-g.o.~Finsler metric. By Lemma \ref{lemma-left-invariant-go}, $\mathfrak{r}$ is compact. But this
is not true.

To summarize, we get $\dim\mathfrak{r}\leq 1$ by contradiction,
and ends the proof of Claim 3.

Summarizing Claim 2 and Claim 3, Lemma \ref{step-1} follows easily.
\end{proof}

We remark that,
with the extra curvature conditions,
Lemma \ref{step-1} strengthens the results
in \cite{XDY-preprint}, where we have proved
$\mathfrak{n}$ has a step size at most two,
and $[\mathfrak{s}_{nc},\mathfrak{r}]=0$.
\subsection{Rank inequality}
We keep all assumptions and notations in this section.
By Lemma \ref{step-1}, $\mathfrak{g}$ is a real reductive Lie algebra with $\dim\mathfrak{c}(\mathfrak{g})\leq1$.

In this subsection, we first prove the rank inequality.

\begin{lemma}\label{step-2}
Keep all assumptions and notations in this section, then
$\mathrm{rk}\mathfrak{g}\leq\mathrm{rk}\mathfrak{h}+1$.
\end{lemma}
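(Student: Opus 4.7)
The plan is to prove the rank inequality by induction on $n = \mathrm{rk}\mathfrak{h}$, using the fixed-point-set technique (Lemma \ref{lemma-go-fix-point-tech}) to drop the isotropy rank by one while preserving the rank difference $\mathrm{rk}\mathfrak{g} - \mathrm{rk}\mathfrak{h}$.

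For the base case $n = 0$, the compact connected isotropy $H$ is trivial, so $F$ is a $G$-g.o.~left invariant Finsler metric on $G$. Lemma \ref{lemma-left-invariant-go} gives that $F$ is bi-invariant and $\mathfrak{g}$ is compact. Any two-dimensional abelian subalgebra of $\mathfrak{g}$ generates a closed $2$-dimensional abelian subgroup $A \subset G$, totally geodesic in $(G,F)$ by bi-invariance and with $F|_A$ a flat translation-invariant Finsler metric. Lemma \ref{lemma-flag-curvature-totally-geodesic} then forces the flag curvature on the corresponding tangent plane in $(G,F)$ to vanish identically in the flag pole, contradicting (FP). Hence $\mathrm{rk}\mathfrak{g} \leq 1 = \mathrm{rk}\mathfrak{h} + 1$.

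For the inductive step, assume the result holds whenever the isotropy rank is less than $n \geq 1$, and suppose for contradiction that $\mathrm{rk}\mathfrak{g} \geq n + 2$. Fix a Cartan subalgebra $\mathfrak{t}_\mathfrak{h} \subset \mathfrak{h}$, let $T_\mathfrak{h}$ be the corresponding torus in $H$, and pick a closed one-dimensional sub-torus $T' \subset T_\mathfrak{h}$ with Lie algebra $\mathfrak{t}'$. By Lemma \ref{lemma-go-fix-point-tech} together with the quotient remark following it, $\mathrm{Fix}_o(T', G/H) = \tilde G' / \tilde H'$ is a totally geodesic submanifold and a $\tilde G'$-g.o.~Finsler space, where $\tilde G' = (C_G(T'))_0/T'$ and $\tilde H' = ((C_G(T'))_0 \cap H)/T'$. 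By Lemma \ref{lemma-flag-curvature-totally-geodesic}, the induced metric inherits non-negative flag curvature and the (FP) condition, and the estimate $\dim\mathrm{Fix}_o(T',G/H) = \dim\mathfrak{m}^{T'} \geq \mathrm{rk}\mathfrak{g} - \mathrm{rk}\mathfrak{h} \geq 2$ ensures that the dimension exceeds one. After quotienting out the ineffective kernel of $\tilde G'$ (which lies in $\tilde H'$ and preserves the rank difference), the new space satisfies the full hypotheses of Section 5, so Lemma \ref{step-1} yields real reductiveness of $\mathrm{Lie}(\tilde G')$. Because any Cartan subalgebra of $\mathfrak{c}_\mathfrak{g}(\mathfrak{t}')$ contains $\mathfrak{t}'$, we find $\mathrm{rk}\,\mathrm{Lie}(\tilde G') = \mathrm{rk}\mathfrak{g} - 1 \geq n+1$ and $\mathrm{rk}\,\mathrm{Lie}(\tilde H') = \mathrm{rk}\mathfrak{h} - 1 = n-1$; the inductive hypothesis then gives $\mathrm{rk}\,\mathrm{Lie}(\tilde G') \leq n$, the desired contradiction.

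The main difficulty is the bookkeeping when passing to the fixed point set: one needs to confirm that (FP) and non-negativity descend to the submanifold (both straightforward from Lemma \ref{lemma-flag-curvature-totally-geodesic}), that the ineffective-kernel quotient preserves the rank difference, and that the resulting space re-enters all the hypotheses of Section 5 so that Lemma \ref{step-1} can be reapplied. No case-by-case analysis is needed here, in contrast to Theorem \ref{thm-preparation}, because the induction only needs to decrement the isotropy rank, not classify the reductive structure.
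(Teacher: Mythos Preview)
Your proof is correct, and its core mechanism---kill the isotropy via fixed-point sets, arrive at a bi-invariant metric on a compact group by Lemma~\ref{lemma-left-invariant-go}, and exhibit a flat abelian totally geodesic submanifold contradicting (FP)---is the same as the paper's. The difference is organizational: rather than inducting down one rank at a time, the paper takes the full maximal torus $T_H\subset H$ in a single step. Then $\mathrm{Fix}_o(T_H,G/H)$ already has finite isotropy over $G''=(C_G(T_H))_0/T_H$, Lemma~\ref{lemma-left-invariant-go} makes $\mathfrak{g}''$ compact with $F''$ bi-invariant, and a maximal torus $T''\subset G''$ of dimension $\mathrm{rk}\mathfrak{g}-\mathrm{rk}\mathfrak{h}$ supplies the flat totally geodesic submanifold directly. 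This one-shot reduction bypasses the need to re-verify the Section~5 hypotheses and re-invoke Lemma~\ref{step-1} at each inductive stage; in your version that re-invocation is in fact superfluous, since $\mathfrak{c}_\mathfrak{g}(\mathfrak{t}')$ is automatically real reductive once $\mathfrak{g}$ is. One small slip: in your base case the connected subgroup generated by a two-dimensional abelian subalgebra need not be closed, but closedness is irrelevant---the immersed subgroup is still flat and totally geodesic, or you can simply take a maximal torus as the paper does.
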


\begin{proof}
Let $T_H$ be the maximal torus subgroup in $H$, and $G'$ the identity component of the centralizer $C_G(T_H)$ of $T_H$ in $G$.
By Lemma \ref{lemma-go-fix-point-tech}, the fixed point set  $\mathrm{Fix}_o(T_H,G/H)$ is a totally geodesic submanifold, and $(\mathrm{Fix}_o(T_H,G/H),F|_{\mathrm{Fix}_o(T_H,G/H)})$
is $G'$-g.o., and as we have observed, g.o.~with respect to the quotient group $G''=G'/T_H$ as well.
Since $G'\cap H$ has the same Lie algebra as $T_H$, i.e.
$(G'\cap H)/T_H$ is a finite group, the restriction of $F$ to
$\mathrm{Fix}_o(T_H,G/H)=G'/G'\cap H=G''/((G'\cap H)/T_H)$
is locally isometric to a $G''$-g.o.~left invariant Finsler metric $F''$
on $G''$. By Lemma \ref{lemma-left-invariant-go}, $\mathfrak{g}''=
\mathrm{Lie}(G'')$ is compact and $F''$ is bi-invariant. Any maximal torus $T''\subset G''$, which can be viewed as the fixed point set of $T''\subset \Delta G''$ in $G''=G''\times G''/\Delta G''$, is a totally geodesic flat Finsler submanifold in $(G'',F'')$.

Assume conversely that $\mathrm{rk}\mathfrak{g}>\mathrm{rk}(\mathfrak{h})+1$,
then $\dim T''=\mathrm{rk}\mathfrak{g}''=\mathrm{rk}\mathfrak{g}-
\mathrm{rk}\mathfrak{h}>1$. By Lemma \ref{lemma-flag-curvature-totally-geodesic},
$(T'',F|_{T''})$ satisfies the (FP) condition. This is
a contradiction because $(T'',F|_{T''})$ is flat.

To summarize, this ends the proof of Lemma \ref{step-2}.
\end{proof}

Then we apply Lemma \ref{step-2} to prove the following lemma.

\begin{lemma}\label{step-3}
We keep all assumptions and notations in this section. If
$\mathfrak{c}(\mathfrak{g})\neq0$, then $\mathfrak{h}$ is
not contained in $[\mathfrak{g},\mathfrak{g}]$.
\end{lemma}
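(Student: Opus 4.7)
The plan is to argue by contradiction: assuming $\mathfrak{h}\subset\mathfrak{g}_{ss}:=[\mathfrak{g},\mathfrak{g}]$, I will produce a $3$-dimensional totally geodesic Cartan-symmetric Finsler submanifold of $(G/H,F)$ whose ``mixed'' flag is flat for every flag-pole direction, which contradicts the (FP) condition.

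First I would reduce to the case where $\mathfrak{g}$ is compact. Since $H$ is connected and $\mathfrak{h}\subset\mathfrak{g}_{ss}$, the connected subgroup $G_{ss}\subset G$ generated by $\mathfrak{g}_{ss}$ contains $H$; Lemma \ref{lemma-go-total-geod-tech} together with Lemma \ref{lemma-flag-curvature-totally-geodesic} make $(G_{ss}/H,F|_{G_{ss}/H})$ into a totally geodesic $G_{ss}$-g.o.~Finsler submanifold inheriting the non-negative flag curvature. Combining $\mathfrak{h}\subset\mathfrak{g}_{ss}$ (so $\mathrm{rk}\,\mathfrak{h}\leq\mathrm{rk}\,\mathfrak{g}_{ss}$) with Lemma \ref{step-2} (so $\mathrm{rk}\,\mathfrak{g}_{ss}=\mathrm{rk}\,\mathfrak{g}-1\leq\mathrm{rk}\,\mathfrak{h}$), one obtains $\mathrm{rk}\,\mathfrak{g}_{ss}=\mathrm{rk}\,\mathfrak{h}$; Theorem \ref{thm-preparation} then forces $\mathfrak{g}_{ss}$, and together with $\dim\mathfrak{c}(\mathfrak{g})=1$ also $\mathfrak{g}$, to be compact.

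Next I would cut down to dimension $3$ by a fixed-point construction. The equal-rank condition makes a Cartan subalgebra $\mathfrak{t}_\mathfrak{h}$ of $\mathfrak{h}$ a Cartan of $\mathfrak{g}_{ss}$; denote by $\Delta\supset\Delta_\mathfrak{h}$ the root systems of $\mathfrak{g}_{ss}$ and $\mathfrak{h}$, and write $\mathfrak{m}_\alpha$ for the real $2$-dimensional root plane associated to $\pm\alpha\in\Delta$. The hypotheses $\dim G/H>1$ and $\dim\mathfrak{c}(\mathfrak{g})=1$ force $\mathfrak{h}\subsetneq\mathfrak{g}_{ss}$, so I pick $\alpha_0\in\Delta\setminus\Delta_\mathfrak{h}$ and let $T':=(\ker\alpha_0\cap T_\mathfrak{h})^0$, a codimension-$1$ subtorus of $T_\mathfrak{h}$. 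Because the root system of a compact semisimple Lie algebra is reduced, the only roots in $\Delta$ vanishing on $T'$ are $\pm\alpha_0$, so Lemma \ref{lemma-go-fix-point-tech} produces a $3$-dimensional totally geodesic submanifold $N:=\mathrm{Fix}_o(T',G/H)$ on which $(G'/T')$ acts g.o.~with $G'=C_G(T')^0$; a direct computation gives $\mathrm{Lie}(G')=\mathfrak{t}_\mathfrak{h}\oplus\mathfrak{c}(\mathfrak{g})\oplus\mathfrak{m}_{\alpha_0}$ and isotropy Lie algebra $\mathfrak{t}_\mathfrak{h}$, and after quotienting by $\mathrm{Lie}(T')$ the effective Lie algebra becomes $\mathfrak{g}_N\cong\mathfrak{su}(2)_{\alpha_0}\oplus\mathfrak{c}(\mathfrak{g})$, with isotropy $\mathfrak{h}_N=\mathbb{R}\alpha_0^\vee\subset\mathfrak{su}(2)_{\alpha_0}$ and reductive complement $\mathfrak{m}_N=\mathfrak{m}_{\alpha_0}\oplus\mathfrak{c}(\mathfrak{g})$.

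The decisive point is that this reductive decomposition is a Cartan decomposition: $[\mathfrak{m}_{\alpha_0},\mathfrak{m}_{\alpha_0}]\subset\mathbb{R}\alpha_0^\vee$ inside the $\mathfrak{su}(2)$-factor, while $\mathfrak{c}(\mathfrak{g})$ commutes with everything, so $[\mathfrak{m}_N,\mathfrak{m}_N]\subset\mathfrak{h}_N$. Picking any nonzero $u_0\in\mathfrak{m}_{\alpha_0}$ and any $Z$ spanning $\mathfrak{c}(\mathfrak{g})$, the flag $\mathbf{P}=\mathrm{span}(Z,u_0)\subset\mathfrak{m}_N$ is an abelian subspace because $Z$ is central and $[u_0,u_0]=0$, so Lemma \ref{prop-symmetric-space-non-compact} immediately yields $K^{F|_N}(o,u,\mathbf{P})=0$ for every flag pole $u\in\mathbf{P}\setminus\{0\}$; Lemma \ref{lemma-flag-curvature-totally-geodesic} then transports this to $K^F(o,u,\mathbf{P})=0$ on $(G/H,F)$, contradicting (FP). The main technical point is arranging that the fixed-point-set tangent space at $o$ reduces to exactly $\mathfrak{m}_{\alpha_0}\oplus\mathfrak{c}(\mathfrak{g})$, which is why $T'$ must annihilate a single root outside $\Delta_\mathfrak{h}$ and why reducedness of the root system of a compact semisimple Lie algebra is essential.
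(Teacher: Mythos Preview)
Your proposal is correct and follows essentially the same two-step architecture as the paper: first establish that $\mathfrak{g}$ is compact using Theorem~\ref{thm-preparation} and the non-negatively curved condition, then produce a rank-two locally symmetric piece (locally $(SO(3)\times\mathbb{R})/(SO(2)\times\{0\})\cong S^2\times\mathbb{R}$) via a codimension-one torus fixed-point set and use Lemma~\ref{prop-symmetric-space-non-compact} to violate the (FP) condition. Your Step~2 is exactly the paper's Claim~2.

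The one noteworthy difference is in the compactness step. The paper's Claim~1 singles out a root $\alpha$ with $\mathfrak{g}^u_{nc;\pm\alpha}\subset\sqrt{-1}\mathfrak{p}_{nc}$, passes to the fixed-point set of the subtorus generated by $\mathfrak{t}_c+\ker\alpha$, and applies Theorem~\ref{thm-preparation} to the resulting $(SL(2,\mathbb{R})\times\mathbb{R})/(SO(2)\times\{0\})$. You instead invoke Lemma~\ref{lemma-go-total-geod-tech} to pass to the totally geodesic $G_{ss}$-g.o.~submanifold $G_{ss}/H$ and apply Theorem~\ref{thm-preparation} there at full rank, using $\mathrm{rk}\,\mathfrak{g}_{ss}=\mathrm{rk}\,\mathfrak{h}$ from Lemma~\ref{step-2}. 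Your route is slightly more economical (no need to locate a specific noncompact root), while the paper's reduction to a rank-one model makes the appeal to Theorem~\ref{thm-preparation} more concrete. Both are valid; note only that your appeal to Theorem~\ref{thm-preparation} tacitly uses $\dim G_{ss}/H>1$, which follows since equal rank forces $\dim G_{ss}/H$ to be even and $\mathfrak{h}=\mathfrak{g}_{ss}$ would give $\dim G/H=1$.
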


\begin{proof}
We denote $\mathfrak{g}=\mathfrak{c}(\mathfrak{g})\oplus
\mathfrak{g}_c\oplus\mathfrak{g}_{nc}$ the Lie algebra direct
sum previously mentioned, then $[\mathfrak{g},\mathfrak{g}]=
\mathfrak{g}_c\oplus\mathfrak{g}_{nc}$.

Assume conversely that $\mathfrak{c}(\mathfrak{g})\neq0$
and $\mathfrak{h}\subset[\mathfrak{g},\mathfrak{g}]$. By
the rank inequality, $\dim\mathfrak{c}(\mathfrak{g})=1$ and
$\mathrm{rk}\mathfrak{h}=\mathrm{rk}\mathfrak{g}_c+
\mathrm{rk}\mathfrak{g}_{nc}$, there exists Cartan subalgebras
$\mathfrak{t}_c$ and $\mathfrak{t}_{nc}$ in $\mathfrak{g}_c$ and
$\mathfrak{g}_{nc}$ respectively, such that $\mathfrak{t}_c+\mathfrak{t}_{nc}$ is a Cartan subalgebra of $\mathfrak{h}$.

We can also find  a  maximal compactly imbedded subalgebra
$\mathfrak{k}_{nc}$ in $\mathfrak{g}_{nc}$ such that $\mathfrak{h}\subset\mathfrak{g}_c+\mathfrak{k}_{nc}$. We have
Cartan decompositions $\mathfrak{g}_{nc}=\mathfrak{k}_{nc}+\mathfrak{p}_{nc}$ and
$\mathfrak{g}_{nc}^u=\mathfrak{k}_{nc}+\sqrt{-1}\mathfrak{p}_{nc}$.
With respect to $\mathfrak{t}_c$ and $\mathfrak{t}_{nc}$, we have
the following root plane decompositions
\begin{eqnarray}
\mathfrak{g}_c&=&\mathfrak{t}_c+\sum_{\alpha\in\Delta_c}
\mathfrak{g}_{c;\pm\alpha},\mbox{ and}\nonumber\\
\mathfrak{g}_{nc}^u&=&\mathfrak{t}_{nc}+\sum_{\alpha\in\Delta^u_{nc}}
\mathfrak{g}^u_{nc;\pm\alpha},\nonumber
\end{eqnarray}
where $\Delta_c\subset\mathfrak{t}_c^*$ and
$\Delta_{nc}^u\subset\mathfrak{t}_{nc}^*$ are root systems,
and $\mathfrak{g}_{c;\pm\alpha}$ and $\mathfrak{g}_{nc;\pm\alpha}^u$ are root planes.

We first prove

{\bf Claim 1:} $\mathfrak{g}_{nc}=0$.

Assume conversely that $\mathfrak{g}_{nc}\neq0$,
then there exists a root $\alpha\in
\Delta_{nc}^u\in\mathfrak{t}_{nc}^*$, such that the
root plane $\mathfrak{g}_{nc;\pm\alpha}^u$ is contained in
$\sqrt{-1}\mathfrak{p}_{nc}$. Let $T'$ be the sub-torus
in $H$ generated by $\mathfrak{t}'=\mathfrak{t}_c+\mathrm{ker}\alpha$.
By Lemma \ref{lemma-flag-curvature-totally-geodesic} and Lemma \ref{lemma-go-fix-point-tech},
the restriction of $F$ to the fixed point set $\mathrm{Fix}_o(T',G/H)$
is locally isometric to a non-negatively curved $SL(2,\mathbb{R})\times\mathbb{R}$-g.o.~Finsler metric on
$(SL(2,\mathbb{R})\times \mathbb{R})/(SO(2)\times\{0\})$. But this is impossible by Theorem \ref{thm-preparation}.

This ends the proof of Claim 1.

Then we prove

{\bf Claim 2:} $\mathfrak{g}_c\subset\mathfrak{h}$.

Assume conversely that $\mathfrak{g}_c$ is not contained in
$\mathfrak{h}$. Then we can find a root $\alpha\in\Delta_c$,
such that the root plane $\mathfrak{g}_{c;\pm\alpha}$ is not
contained in $\mathfrak{h}$. Let $T'$ be the sub-torus
in $H$ generated by $\mathfrak{t}'=\mathrm{ker}\alpha$.
By Lemma \ref{lemma-flag-curvature-totally-geodesic} and Lemma \ref{lemma-go-fix-point-tech},
the restriction of $F$ to the fixed point set $\mathrm{Fix}_o(T',G/H)$
is locally isometric to a
homogeneous Finsler metric $F'$ on
$(SO(3)\times\mathbb{R})/(SO(2)\times\{0\})$ satisfying the
(FP) condition. Since $((SO(3)\times\mathbb{R})/(SO(2)\times\{0\}),F')$
is a rank two globally symmetric Finsler space, by Lemma \ref{prop-symmetric-space-non-compact}, it can not satisfy
the (FP) condition. This is a contradiction ends the proof of
Claim 2.

Summarizing the two claims, we see that $\dim G/H=1$, contradicting the dimension assumption $\dim G/H>1$.

This ends the proof of Lemma \ref{step-3}.
\end{proof}

\subsection{Proof of Theorem \ref{main-thm} and its corollary}

Now we summarize above argument to prove Theorem \ref{main-thm}.

{\bf Proof of Theorem \ref{main-thm}.}

Firstly, we prove the the statement for the compactness.

Let $(M,F)$ with $\dim M>1$ be a g.o.~Finsler space which has non-negative flag curvature and satisfies the (FP) condition. We denote $G'=I_0(M,F)$
its connected isometry group. Then by Lemma \ref{step-1}, $\mathfrak{g}'=\mathrm{Lie}(G')$ is a real reductive Lie algebra
with $\dim\mathfrak{c}(\mathfrak{g'})\leq1$. If we present $M$ as
$M=G'/H'$, then the isotropy subgroup $H'$ at $o=eH'$ is compact.
By Lemma \ref{step-2}, we have the rank inequality
$\mathrm{rk}\mathfrak{g'}\leq\mathrm{rk}\mathfrak{h'}+1$.
So by Theorem \ref{thm-preparation}, we see that $\mathfrak{g'}$ is compact. If $\mathfrak{g'}$ is semi-simple,
$G$ is compact itself. Otherwise
 $\mathfrak{g}'=\mathfrak{g}''\oplus\mathbb{R}$,
in which $\mathfrak{g}''$ is a compact semi-simple subalgebra.
By Lemma \ref{step-3}, $\mathfrak{h}'$ is not contained
in $\mathfrak{g}''$, so the compact subgroup $G''$ generated by
$\mathfrak{g}''$ acts transitively on $M$.
To summarize, in both cases, $M$ admits the transitive action of
a compact Lie group, so $M$ is compact.

Nextly, we prove the statement for the rank inequality. Let $G$ be a quasi-compact Lie group, with the compact Lie algebra $\mathfrak{g}$, which acts
isometrically and transitively on $(M,F)$, and $H$ the corresponding
isotropy subgroup, which Lie algebra $\mathfrak{h}$ is also compact.
We may further assume that $G$ acts effectively, i.e., $\mathfrak{g}\subset\mathfrak{g}'$ and $\mathfrak{h}=\mathfrak{h}'\cap\mathfrak{g}$,
because changing $G$ and $H$
with their images in $G'=I_0(M,F)$ does not change the value of
$\mathrm{rk}\mathfrak{g}-\mathrm{rk}\mathfrak{h}$.

We can find Cartan subalgebras $\mathfrak{t}'_1$ and $\mathfrak{t}'_2$ of $\mathfrak{g}'$, such that
$\mathfrak{t}'_1\cap\mathfrak{h}'$ and $\mathfrak{t}'_2\cap\mathfrak{g}$ are Cartan subalgebras
of $\mathfrak{h}'$ and $\mathfrak{g}$ respectively. We can find
an element $g\in G'$, such that $\mathrm{Ad}(g)\mathfrak{t}'_1=
\mathfrak{t}'_2$. Since $G$ acts transitively on $M=G'/H'$, we
can find $g_1\in H'$ and $g_2\in G$, such that $g=g_2^{-1}g_1$.
Then for the Cartan subalgebra $\mathfrak{t}=\mathrm{Ad}(g_1)\mathfrak{t}'_1
=\mathrm{Ad}(g_2)\mathfrak{t}'_2$,
$\mathfrak{t}\cap\mathfrak{h}'$ and $\mathfrak{t}\cap\mathfrak{g}$ are Cartan subalgebras of
$\mathfrak{h}'$ and $\mathfrak{g}$ respectively.
By Lemma \ref{step-2}, $\dim\mathfrak{t}'-\dim\mathfrak{t}'\cap\mathfrak{h}'=
\mathrm{rk}\mathfrak{g}'-\mathrm{rk}\mathfrak{h}'\leq1$,
so we have
\begin{eqnarray}
\mathrm{rk}\mathfrak{g}-\mathrm{rk}\mathfrak{h}
&\leq&\dim\mathfrak{t}'\cap\mathfrak{g}-
\dim\mathfrak{t}'\cap\mathfrak{h}
\leq\dim\mathfrak{t}'-\dim\mathfrak{t}'\cap\mathfrak{h}'\nonumber\\
&=&\mathrm{rk}\mathfrak{g}'-\mathrm{rk}\mathfrak{h}'\leq1,
\nonumber
\end{eqnarray}
which proves the rank inequality for $G$ and $H$.

This ends the proof of Theorem \ref{main-thm}.
\ \rule{0.5em}{0.5em}

Notice that for any compact connected Lie group $G$ and its closed subgroup $H$, $G/H$ has a positive Euler number iff
$\mathrm{rk}\mathfrak{g}=\mathrm{rk}\mathfrak{h}$.
So for $G$ and $H$ in Theorem \ref{main-thm}, $\mathrm{rk}\mathfrak{g}-\mathrm{rk}\mathfrak{h}$ equals $0$ or $1$, when $\dim M$ is even or odd respectively.

As an application for Theorem \ref{main-thm}, we discuss an even dimensional g.o.~Finsler space $(M,F)$ which has non-negative flag curvature and satisfies
the (FP) condition.

By Theorem \ref{main-thm}, we can find a compact Lie group $G\subset I_0(M,F)$ which acts transitively on $M$. If we denote
$M=G/H$, then we have $\mathrm{rk}\mathfrak{g}=\mathrm{rk}\mathfrak{h}$
by the rank inequality in Theorem \ref{main-thm} and the assumption that $\dim M$ is even.

Let $\mathfrak{t}$ be a Cartan subalgebra of $\mathfrak{g}$ contained in $\mathfrak{h}$. Then we have the root plane decompositions of $\mathfrak{g}$ and $\mathfrak{h}$ with respect to $\mathfrak{t}$. A root $\alpha$ of $\mathfrak{g}$ belongs to
$\mathfrak{h}$ iff $\mathfrak{g}_{\pm\alpha}\subset\mathfrak{h}$,
i.e., the root plane $\mathfrak{g}_{\pm\alpha}$ of $\mathfrak{g}$ is also a root plane of $\mathfrak{h}$.

\begin{lemma}\label{lemma-last}
Keep all above assumptions and notations for an even dimensional g.o. Finsler
space $(M,F)$ which has non-negative flag curvature and
satisfies the (FP) condition. Then there do not exist a pair
of linearly independent roots $\alpha$ and $\beta$, such that
they are not roots of $\mathfrak{h}$ and $\alpha\pm\beta$ are
not roots of $\mathfrak{g}$.
\end{lemma}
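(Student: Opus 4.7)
The plan is to argue by contradiction. Suppose there exist linearly independent roots $\alpha,\beta\in\Delta$ which are not roots of $\mathfrak{h}$ and for which neither $\alpha+\beta$ nor $\alpha-\beta$ is a root of $\mathfrak{g}$. The first observation is that, by the standard structure theory of the compact Lie algebra $\mathfrak{g}$, since $\alpha\pm\beta\notin\Delta\cup\{0\}$ we have $[\mathfrak{g}_{\pm\alpha},\mathfrak{g}_{\pm\beta}]=0$, while $[\mathfrak{g}_{\pm\alpha},\mathfrak{g}_{\pm\alpha}]$ and $[\mathfrak{g}_{\pm\beta},\mathfrak{g}_{\pm\beta}]$ both lie in $\mathfrak{t}$. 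Moreover, since $\alpha,\beta$ are not roots of $\mathfrak{h}$ and $\mathfrak{t}\subset\mathfrak{h}$, the $\mathrm{Ad}(T)$-irreducible real planes $\mathfrak{g}_{\pm\alpha},\mathfrak{g}_{\pm\beta}$ meet $\mathfrak{h}$ trivially and hence sit inside $\mathfrak{m}$.

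The next step is to produce a suitable torus element $g\in T\subset H$. Writing $g=\exp X$ with $X\in\mathfrak{t}$, the action of $\mathrm{Ad}(g)$ on each real root plane $\mathfrak{g}_{\pm\gamma}$ is rotation by $\gamma(X)$, which is the identity iff $\gamma(X)\in 2\pi\mathbb{Z}$. By a genericity argument inside the affine lattice $\{X\in\mathfrak{t}:\alpha(X),\beta(X)\in 2\pi\mathbb{Z}\}$, one may choose $X$ so that $\gamma(X)\notin 2\pi\mathbb{Z}$ for every root $\gamma\notin\{\pm\alpha,\pm\beta\}$, since each condition $\gamma(X)\in 2\pi\mathbb{Z}$ cuts out a proper affine subset of this lattice. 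For such an $X$ the centralizer Lie algebra is exactly
\[
\mathfrak{g}':=\mathfrak{c}_\mathfrak{g}(g)=\mathfrak{t}+\mathfrak{g}_{\pm\alpha}+\mathfrak{g}_{\pm\beta}.
\]

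By Lemma \ref{lemma-go-fix-point-tech}, the fixed point component $M':=\mathrm{Fix}_o(g,G/H)$ is a totally geodesic submanifold in $(G/H,F)$, and with respect to $G':=(C_G(g))_0$ the submanifold metric is g.o. Writing $M'=G'/H'$ with $H'=G'\cap H$, the isotropy Lie algebra equals $\mathfrak{h}'=\mathfrak{g}'\cap\mathfrak{h}=\mathfrak{t}$, and the reductive complement is $\mathfrak{m}'=\mathfrak{g}_{\pm\alpha}+\mathfrak{g}_{\pm\beta}$. Combining the three bracket identities noted above gives $[\mathfrak{m}',\mathfrak{m}']\subset\mathfrak{t}=\mathfrak{h}'$, so $\mathfrak{g}'=\mathfrak{h}'+\mathfrak{m}'$ is a Cartan decomposition and $(G'/H',F|_{M'})$ is globally symmetric in the sense of Lemma \ref{prop-symmetric-space-non-compact}.

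For any nonzero $u\in\mathfrak{g}_{\pm\alpha}$ and $v\in\mathfrak{g}_{\pm\beta}$, the plane $\mathbf{P}=\mathbb{R}u+\mathbb{R}v\subset\mathfrak{m}'$ is an abelian subspace of $\mathfrak{m}'$, so the rank of $(G'/H',F|_{M'})$ is at least $2$. By the remark immediately following Lemma \ref{prop-symmetric-space-non-compact}, this space cannot satisfy the (FP) condition; equivalently, the formula of that lemma yields $K^{F|_{M'}}(o,y,\mathbf{P})=0$ for every nonzero flag pole $y\in\mathbf{P}$. On the other hand, since $M'$ is totally geodesic in $(G/H,F)$, Lemma \ref{lemma-flag-curvature-totally-geodesic} transfers the (FP) condition from the ambient space to $(G'/H',F|_{M'})$, which is the desired contradiction. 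The main delicate point is the genericity step that isolates $\mathfrak{g}_{\pm\alpha}+\mathfrak{g}_{\pm\beta}$ as the full non-toral part of $\mathfrak{c}_\mathfrak{g}(g)$; once that is secured, everything else reduces to the symmetric-space rank obstruction already recorded after Lemma \ref{prop-symmetric-space-non-compact}.
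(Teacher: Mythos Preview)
Your proof is correct and follows the same overall strategy as the paper: produce a totally geodesic fixed-point submanifold that is (locally) a rank-two symmetric Finsler space, then invoke the observation after Lemma~\ref{prop-symmetric-space-non-compact} to contradict the (FP) condition via Lemma~\ref{lemma-flag-curvature-totally-geodesic}.

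The one genuine technical difference is in the choice of the set $L\subset H$ whose fixed points you take. The paper uses the entire sub-torus $T'$ generated by $\ker\alpha\cap\ker\beta$ and asserts directly that the fixed-point set is locally $S^2\times S^2=(SO(3)\times SO(3))/(SO(2)\times SO(2))$. You instead choose a single generic element $g\in T$ so that $\mathfrak{c}_\mathfrak{g}(g)=\mathfrak{t}+\mathfrak{g}_{\pm\alpha}+\mathfrak{g}_{\pm\beta}$ exactly. Your route is arguably more careful: the paper's identification tacitly assumes that $\pm\alpha,\pm\beta$ are the \emph{only} roots in $\mathrm{span}\{\alpha,\beta\}$, which need not hold when that $2$-plane meets $\Delta$ in a $B_2$- or $G_2$-type subsystem. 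Your generic-element construction sidesteps this issue. One small point worth tightening: for roots $\gamma\in\mathrm{span}\{\alpha,\beta\}$ the condition $\gamma(X)\in 2\pi\mathbb{Z}$ is constant on each affine piece $\{X:\alpha(X)=2\pi a,\ \beta(X)=2\pi b\}$, so ``proper affine subset'' is not quite the right picture there; one needs to choose the integers $(a,b)$ with suitable parity (e.g.\ $a+b$ odd), after which the remaining out-of-span conditions are genuinely measure-zero and can be avoided generically.
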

\begin{proof}
Assume conversely that there exists such a pair of roots $\alpha$
and $\beta$ as indicated in the lemma. Denote $T'$ the sub-torus
generated by $\mathfrak{t}'=\mathrm{ker}\alpha \cap \mathrm{ker}\beta$. By Lemma \ref{lemma-flag-curvature-totally-geodesic}, then the restriction of $F$ to
$\mathrm{Fix}_o(T',M)$ is locally isometric to an $SO(3)\times SO(3)$-invariant metric $F'$ on $M'=S^2\times S^2=(SO(3)\times SO(3))/(SO(2)\times SO(2))$ which has non-negative flag curvature
and satisfies the (FP) condition. But $(M',F')$ is a rank two globally symmetric Finsler space. By Lemma \ref{prop-symmetric-space-non-compact}, it can not satisfy
the (FP) condition. This is a contradiction, which ends the proof
of this lemma.
\end{proof}

By Lemma \ref{lemma-last} and the algebraic argument in \cite{Wa1972,XDHH}, we see that $G/H$ belongs to the classification list of even dimensional positively curved homogeneous
Riemannian or Finsler manifolds, i.e.,
\begin{eqnarray}
& &SO(2n+1)/SO(2n), G_2/SU(3), SU(n)/S(U(n-1)U(1)), \nonumber\\
& &Sp(n)/Sp(n-1)U(1),Sp(n)/Sp(n-1)Sp(1), F_4/Spin(9), \label{classification-list-1}\\
& &SU(3)/T^2, Sp(3)/Sp(1)^3, F_4/Spin(8).\label{classification-list-2}
\end{eqnarray}

On each coset spaces in (\ref{classification-list-1}), we can
find positively curved $G$-normal homogeneous Riemannian metrics.
However, the three Wallach spaces in (\ref{classification-list-2})
do not admit positively curved $G$-g.o.~homogeneous Riemannian metrics \cite{AW2017,Ber61}. Unlike the rareness of g.o.~Riemannian metrics on the Wallach spaces, there are too many Finsler ones to be completely classified.
Until now, we can not determine if the three Wallach spaces
admit g.o.~Finsler metrics metrics which are either
positively curved or satisfy the requirement in Theorem \ref{main-thm}.

At the end, we summarize above application to the following
corollary.

\begin{corollary}\label{last corollary}
Any connected even dimensional g.o.~Finsler space which has
non-negative flag curvature and satisfies the
(FP) condition must be a compact smooth coset space
which admits positively curved homogeneous Riemannian
or Finsler metrics.
\end{corollary}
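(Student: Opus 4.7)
The plan is to reduce to a compact, rank-matched coset presentation and then invoke the Wallach-type classification of positively curved homogeneous manifolds via the root obstruction supplied by Lemma \ref{lemma-last}. First, I would apply Theorem \ref{main-thm} to the given g.o.\ Finsler space $(M,F)$ of even dimension: this produces a quasi-compact (hence in the even-dimensional case, compact) Lie group $G$ acting transitively and effectively on $M$, with isotropy $H$, satisfying $\mathrm{rk}\,\mathfrak{g}\le\mathrm{rk}\,\mathfrak{h}+1$. Because the Euler characteristic of a compact homogeneous $G/H$ is positive iff $\mathrm{rk}\,\mathfrak{g}=\mathrm{rk}\,\mathfrak{h}$, and since $\mathrm{rk}\,\mathfrak{g}-\mathrm{rk}\,\mathfrak{h}=1$ would force $\dim(G/H)$ to be odd (the complementary root planes contribute in pairs, leaving a single Cartan direction in $\mathfrak{m}$), the even-dimension hypothesis upgrades the rank inequality to equality. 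In particular a Cartan subalgebra $\mathfrak{t}$ of $\mathfrak{g}$ may be chosen inside $\mathfrak{h}$.

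Next, I would analyze the roots of $\mathfrak{g}$ relative to $\mathfrak{t}$ and separate them into those that already lie in $\mathfrak{h}$ and those that do not. The key algebraic input is Lemma \ref{lemma-last}: no two linearly independent roots $\alpha,\beta\notin\Delta_\mathfrak{h}$ can satisfy $\alpha\pm\beta\notin\Delta_\mathfrak{g}$. This is the structural statement that rules out $S^2\times S^2$-type totally geodesic subobjects, obtained from the totally geodesic fixed-point set of the subtorus $T'$ generated by $\ker\alpha\cap\ker\beta$, and then using the rank-two symmetric space obstruction in Lemma \ref{prop-symmetric-space-non-compact} to contradict the (FP) condition on $M$.

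The third step is to feed this root-theoretic restriction, together with rank equality, into the standard algebraic classification of even-dimensional coset spaces admitting positively curved normal homogeneous (or more generally, positively curved homogeneous) metrics, due to Wallach \cite{Wa1972} and reworked in the Finsler setting in \cite{XDHH}. That argument is purely combinatorial on root systems and depends only on the algebraic hypothesis we have just verified; it outputs the finite list
\[
SO(2n{+}1)/SO(2n),\quad G_2/SU(3),\quad SU(n)/S(U(n{-}1)U(1)),
\]
\[
Sp(n)/Sp(n{-}1)U(1),\quad Sp(n)/Sp(n{-}1)Sp(1),\quad F_4/Spin(9),
\]
together with the three Wallach spaces $SU(3)/T^2$, $Sp(3)/Sp(1)^3$, $F_4/Spin(8)$. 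Each of these coset spaces is known to admit a positively curved homogeneous Riemannian or Finsler metric, so $M=G/H$ is in particular a smooth coset space with this property, giving the corollary.

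The main obstacle is the second step: verifying Lemma \ref{lemma-last}'s hypotheses in the present ambient group requires one to recognize the fixed-point component $\mathrm{Fix}_o(T',M)$ as a compact rank-two symmetric space on which the restricted metric would have to be both (FP) and flat along a commutative flag (via the Cartan decomposition formula of Lemma \ref{prop-symmetric-space-non-compact}). All other steps are bookkeeping: the reduction to compact $G$, the passage from the rank inequality to rank equality by the parity of $\dim M$, and the invocation of the prior classification. The sole unresolved point, as already noted in the introduction, is that one cannot at present rule out the three Wallach spaces, so the statement of the corollary must (and does) include them as potential members of the classification list.
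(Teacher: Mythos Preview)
Your proposal is correct and follows essentially the same route as the paper: apply Theorem \ref{main-thm} to obtain a compact transitive $G$ with the rank inequality, upgrade to rank equality via the even-dimension hypothesis, invoke Lemma \ref{lemma-last} (whose proof you sketch accurately using the $T'$-fixed-point set and Lemma \ref{prop-symmetric-space-non-compact}), and then feed the resulting root obstruction into the Wallach/\cite{XDHH} classification. One small imprecision: the compactness of $G$ does not come from the even-dimension hypothesis acting on a quasi-compact group, but is already part of the compactness conclusion of Theorem \ref{main-thm} (its proof exhibits a compact subgroup acting transitively regardless of parity); this does not affect the argument.
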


{\bf Acknowledgements.} The author sincerely thanks
Fernando Galaz-Garcia,
Yuri G. Nikonorov and Wolfgang Ziller for helpful discussions.
This paper is supported by National Natural Science Foundation of China (No. 11821101, No. 11771331), Beijing Natural Science Foundation
(No. 00719210010001, No. 1182006), Capacity Building for Sci-Tech  Innovation -- Fundamental Scientific Research Funds (No. KM201910028021).

\vspace{5mm}
\end{document}